\newtheorem{theorem}{Theorem}[section]
\theoremstyle{definition}
\theoremstyle{example}
\theoremstyle{proposition}
\newtheorem{proposition}[theorem]{Proposition}
\theoremstyle{remark}
\newtheorem{remark}[theorem]{Remark}
\theoremstyle{corollary}
\numberwithin{equation}{section}
\DeclareMathOperator{\re}{Re}
\DeclareMathOperator{\im}{Im}
\begin{document}
\def\R{{\mathbb R}}
\def\C{{\mathbb C}}
\def\N{{\mathbb N}}
\def\DD{{\mathbb D}}
\def\S{{\mathbb S}}
\def\rr{{\cal R}}
\def\e{\emptyset}
\def\dQ{\partial Q}
\def\dk{\partial K}
\def\endofproof{{\rule{6pt}{6pt}}}
\def\di{\displaystyle}
\def\dist{\mbox{\rm dist}}
\def\u-{\overline{u}}
\def\du{\frac{\partial}{\partial u}}
\def\dv{\frac{\partial}{\partial v}}
\def\dt{\frac{d}{d t}}
\def\dx{\frac{\partial}{\partial x}}
\def\con{\mbox{\rm const }}
\def\Box{\spadesuit}
\def\ii{{\bf i}}
\def\curl{{\rm curl}\,}
\def\dive{{\rm div}\,}
\def\grad{{\rm grad}\,}
\def\dist{\mbox{\rm dist}}
\def\pr{\mbox{\rm pr}}
\def\pp{{\cal P}}
\def\supp{\mbox{\rm supp}}
\def\Arg{\mbox{\rm Arg}}
\def\In{\mbox{\rm Int}}
\def\Re{\mbox{\rm Re}\:}
\def\li{\mbox{\rm li}}
\def\ep{\epsilon}
\def\tr{\tilde{R}}
\def\be{\begin{equation}}
\def\ee{\end{equation}}
\def\cn{{\mathcal N}}
\def\sn{{\mathbb  S}^{n-1}}
\def\Ker {{\rm Ker}\:}
\def\el{E_{\lambda}}
\def\Rc{{\mathcal R}}
\def\Ha{H_0^{ac}}
\def\la{\langle}
\def\ra{\rangle}
\def\Ko{\Ker G_0}
\def\Kd{\Ker G_b}
\def\hc{{\mathcal H}}
\def\caH{\hc}
\def\caO{{\mathcal O}}
\def\la{\langle}
\def\ra{\rangle}
\def\sp{\sigma_{+}}
\def\pa{\partial}
\def\et{|\eta'|^2}
\def\lg{L^2(\Gamma)}
\def\h1{H^1_h(\Gamma)}
\def\nc{{\mathcal N}}
\def\ii{{\bf i}}

% \title[short text for running head]{full title}
\title[Maxwell eigenvalues] {Eigenvalues for Maxwell's equations with dissipative boundary conditions
}

\thanks{}

%    author two information

\author[F. Colombini, V. Petkov] {Ferruccio Colombini, \ Vesselin Petkov$\,^\dagger$ }
\address{Dipartimento di Matematica, Universit\`a di Pisa, Italia}
\email{colombini@dm.unipi.it}
\address{Institut de Math\'ematiques de Bordeaux, 351,
Cours de la Lib\'eration, 33405  Talence, France}
\email{petkov@math.u-bordeaux.fr}
\author[J. Rauch]{\ Jeffrey Rauch}
\thanks{$^\dagger$ The author was partially supported by the ANR project Nosevol BS01019 01 }
\address{Department of Mathematics, University of Michigan, USA}
\email{rauch@umich.edu}

%    \subjclass is required.
\subjclass[2000]{Primary 35P20, Secondary 47A40, 35Q61}

\date{}

\dedicatory{}

%    "Communicated by" -- provide editor's name; required.

%    Abstract is required.
\begin{abstract}
 Let $V(t) = e^{tG_b},\: t \geq 0,$ be the semigroup
  generated by Maxwell's equations  in an exterior domain $\Omega \subset \R^3$  with dissipative boundary condition $E_{tan}- \gamma(x) (\nu \wedge B_{tan}) = 0, \gamma(x) > 0, \forall x \in \Gamma = \pa \Omega.$  We prove that if $\gamma(x)$ is nowhere equal to 1, then 
  for every $0 < \ep \ll 1$ and every $N \in \N$ the eigenvalues of $G_b$  lie in the region $\Lambda_{\ep} \cup {\mathcal R}_N,$ where
$\Lambda_{\epsilon} = \{ z \in \C:\: |\re z | \leq C_{\epsilon} (|\im z|^{\frac{1}{2} + \epsilon} + 1), \: \re z < 0\},$
${\mathcal R}_N = \{z \in \C:\: |\im z| \leq C_N (|\re z| + 1)^{-N},\: \re z < 0\}.$   
\end{abstract}

\maketitle

%    Text of article.

\section{Introduction}

Suppose that $K \subset \{ x\in \R^3: \: |x| \leq a\}$ is an open connected domain 
and $\Omega := \R^3 \setminus \bar{K}$  
is an open connected  domain with $C^{\infty}$ smooth boundary $\Gamma$.
Consider the boundary problem
\begin{equation}  \label{eq:1.1}
\begin{aligned} 
&\partial_t E = \curl B,\qquad \partial_t B = -\curl E \quad {\rm in}\quad \R_t^+ \times \Omega,
\\
&E_{tan} - \gamma(x)(\nu \wedge B_{tan}) = 0 \quad{\rm on} \quad \R_t^+ \times \Gamma,
\\
&E(0, x) = e_0(x), \qquad B(0, x) = b_0(x). 
\end{aligned}
\end{equation}
with initial data $f = (e_0, b_0) \in  (L^2(\Omega))^6 = {\mathcal H}.$
Here $\nu(x)$ denotes the unit outward normal to $\partial \Omega$ at $x \in \Gamma$ pointing into $\Omega$, 
$\la\: , \ra$ denotes the scalar product in $\C^3$,
$u_{tan} := u - \la u, \nu\ra \nu$, and $\gamma(x) \in C^{\infty}(\Gamma)$ satisfies $\gamma(x) > 0$ for all $ x \in \Gamma.$ The solution of the problem (\ref{eq:1.1}) is given by a contraction semigroup $(E, B) = V(t)f = e^{tG_b} f,\: t \geq 0$, where the generator
$G_b$ has  domain $D(G_b)$
that is the closure in the graph norm of functions $u = (v, w) \in (C_{(0)}^{\infty} (\R^3))^3 \times (C_{(0)}^{\infty} (\R^3))^3$ satisfying the boundary condition $v_{tan} - \gamma (\nu \wedge w_{tan}) = 0$ on $\Gamma.$ 

In an earlier paper \cite{CPR1} we proved that the spectrum of $G_b$ in $\Re z < 0$ 
consists of  isolated eigenvalues with finite multiplicity.
If $G_b f = \lambda f$ with $\re \lambda < 0$, the solution $u(t, x) = V(t) f = e^{\lambda t} f(x) $ of (\ref{eq:1.1}) has exponentially decreasing global energy. Such solutions are called {\bf asymptotically disappearing} and they
are invisible for  inverse scattering problems.
It was proved \cite{CPR1} that if there is at least one eigenvalue $
\lambda$ of $G_b$ with $\re \lambda < 0$, then the wave operators $W_{\pm}$ are not complete, that is ${\text Ran}\: W_{-} \not=  {\text Ran}\: W_{+}$. Hence  we cannot define the scattering operator $S$ related to the Cauchy problem for the Maxwell system and (1.1) by the product $ W_{+}^{-1}W_{-}$. 
For the perfect conductor boundary conditions for Maxwell's 
equations, the energy is conserved in time and the unperturbed and perturbed  problems are associated to unitary groups. The corresponding scattering operator $S(z): (L^2(\S^2))^2 \to (L^2(\S^2))^2$ satisfies the identity
\begin{equation} \label{eq:1.2}
S^{-1}(z)= S^*(\bar{z}),\quad z \in \C
\end{equation}
if $S(z)$ is invertible at $z$. The scattering operator $S(z)$ defined in \cite{LP} is such that $S(z)$ and $S^*(z)$ are analytic in the "physical" half plane $\{z \in \C:\im z < 0\}$ and the above relation for conservative boundary conditions implies that $S(z)$ is invertible for $\im z > 0$. For dissipative boundary conditions the relation (\ref{eq:1.2}) in general is not true and $S(z_0)$ may have a non trivial kernel for some $z_0, \im z_0 > 0.$ Lax and Phillips \cite{LP} proved that this implies that $\ii z_0$ is an eigenvalue of $G_b$. The analysis of the location of the eigenvalues of $G_b$ is important for the location of the points where  the kernel of $S(z)$ is not trivial.\\

 The main result  of this paper is  the following (see Figure 1)
\begin{theorem} Assume that for all $x\in \Gamma$, $\gamma(x) \neq 1$.
Then for every $0 < \ep \ll 1$ and every $N \in \N$ there 
are constants $C_{\ep} > 0$ and $C_N > 0$ such that the eigenvalues of $G_b$ lie in the region $\Lambda_{\ep} \cup {\mathcal R}_N$, where
$$\Lambda_{\ep} = \{ z \in \C: \: |\re z | \leq C_{\ep} (|\im z|^{1/2 + \epsilon} + 1),\: \re z < 0\},$$
$${\mathcal R}_N = \{ z \in \C: \: |\im z| \leq C_N ( |\re z| + 1)^{-N},\: \re z < 0\}.$$
\end{theorem}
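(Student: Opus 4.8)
The strategy is to convert eigenvalue estimates for $G_b$ into resolvent-type estimates for a boundary-value problem with a spectral parameter, and to exploit the quantitative dissipativity coming from $\gamma(x)\neq 1$. If $G_b f = \lambda f$ with $f=(v,w)\neq 0$ and $\re\lambda<0$, then the components satisfy $\lambda v = \curl w$, $\lambda w = -\curl v$ in $\Omega$, together with the boundary condition $v_{tan}-\gamma(\nu\wedge w_{tan})=0$ on $\Gamma$. Eliminating one component gives a second-order equation $(\lambda^2 + \curl\curl)v = 0$, i.e. $(\Delta - \lambda^2)v=0$ together with $\dive v = 0$, and likewise for $w$, so we are looking at divergence-free solutions of the Helmholtz-type equation with parameter $z=-\ii\lambda$ (so that $\re z = \im\lambda$ large corresponds to the high-frequency regime). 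The first step is to write $\lambda = \tau$ with $\re\tau<0$ and record the basic energy identity: pairing $G_bf=\lambda f$ with $f$ in $\caH$ and taking real parts yields $\re\lambda\,\|f\|^2 = -\int_\Gamma \gamma |\,\nu\wedge w_{tan}|^2\,dS \le 0$, and an analogous identity isolates $\int_\Gamma |v_{tan}|^2$. These give the rough localization $\re\lambda<0$ and show that for an eigenvalue the boundary traces cannot be too small relative to $\|f\|$.

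The heart of the matter is a semiclassical/high-frequency parametrix construction near $\Gamma$ for the system with large parameter $|\im\lambda|$. I would set $h = |\im\lambda|^{-1}$ and study the problem $h^2\Delta - (h\lambda)^2$ microlocally near the boundary, splitting $T^*\Gamma$ into the elliptic (non-characteristic) region $|\eta'|^2 > 1$, the hyperbolic (glancing-free) region $|\eta'|^2<1$, and a neighborhood of the glancing set $|\eta'|^2=1$. In the elliptic region the problem is invertible with good bounds; in the hyperbolic region the dissipative boundary condition, combined with $\gamma\neq 1$, is \emph{strictly} dissipative — the reflection coefficient for the boundary operator has modulus bounded away from $1$ — which forces $|\re\lambda|$ to grow. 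The condition $\gamma(x)\neq 1$ is exactly what makes the Lopatinski-type determinant of the boundary problem nonvanishing on the hyperbolic region (when $\gamma\equiv 1$ one gets a critical, non-strictly-dissipative condition and total reflection is possible along glancing rays). Carrying out this analysis carefully, one shows: either the eigenvalue's boundary data concentrates in the elliptic/hyperbolic region, in which case the strict dissipativity gives $|\re\lambda|\lesssim 1$ (putting $\lambda$ in $\Lambda_\ep$ with room to spare), or it concentrates near glancing, in which case a separate normal-form analysis near the glancing set — using an Airy-type model and the fact that the trapped glancing contributions are superpolynomially small because of geometric control (the boundary is contained in $|x|\le a$, so all billiard rays escape) — forces $|\im\lambda|\lesssim (|\re\lambda|+1)^{-N}$, placing $\lambda$ in $\rr_N$.

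The main obstacle, and the step that will require the most care, is the glancing region: one must construct a parametrix uniform up to the glancing set and quantify how the dissipative boundary condition (with $\gamma\neq1$) damps the glancing-ray contributions. This is where the superpolynomial gain $(|\re z|+1)^{-N}$ comes from, and it is delicate because near glancing the operator degenerates and the standard elliptic estimates fail; the Melrose--Taylor / Airy parametrix for glancing rays has to be combined with the dissipative reflection to show that a glancing eigenfunction cannot persist unless $\im\lambda$ is tiny. I would handle this by: (i) reducing to a neighborhood of a single glancing point via a microlocal partition of unity and propagation of singularities, (ii) putting the problem into Airy normal form, (iii) computing the effective boundary symbol including the $\gamma$-dependent reflection and checking it is non-degenerate precisely because $\gamma\neq1$, and (iv) using the absence of trapped rays (here trivial since $K$ is compact and convex-independent) to close the estimate. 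The union $\Lambda_\ep\cup\rr_N$ in the statement reflects exactly this dichotomy: $\Lambda_\ep$ absorbs the "generic" eigenvalues controlled by strict dissipativity, while $\rr_N$ is the thin sliver along the negative real axis where glancing-dominated eigenvalues can live, with $\im\lambda$ forced to decay faster than any polynomial in $|\re\lambda|$. Finally, I would assemble the two regimes: a contradiction argument shows that any $\lambda$ outside $\Lambda_\ep\cup\rr_N$ would yield an eigenfunction violating one of the two estimates, completing the proof.
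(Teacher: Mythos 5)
Your plan diverges from the paper at exactly the two places where the theorem is hard, and both divergences are genuine gaps. First, the thin region ${\mathcal R}_N$ does not come from glancing rays. In the normalization $\lambda=\ii\sqrt z/h$, the eigenvalues near the negative real axis correspond to $z\in Z_2$, i.e.\ $\re z=-1$, where $-h^2\Delta-z$ is semiclassically \emph{elliptic}: there is no hyperbolic or glancing region at all, so a Melrose--Taylor/Airy normal form has nothing to act on. The degeneracy there sits in the boundary (impedance) symbol $\rho(x',\xi',z)-\sqrt z/\gamma(x')$, which can vanish at interior points of $T^*\Gamma$ where $\sqrt{1+r_0(x',\xi')}=1/\gamma(x')$. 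The superpolynomial gain is obtained not from propagation or non-trapping but from an $O(h^N)$-accurate parametrix $T(z,h)$ for the Dirichlet-to-Neumann map, the fact that $\re\la (\nc(z,h)-\sqrt z\,\gamma)E_0,E_0\ra_{\lg}={\mathcal O}(h^N)$ at $z=-1$, a Taylor expansion in $z$ at $z=-1$, and a lower bound for $\bigl|\im\la (Op_h(\pa_z\rho)-\tfrac{1}{2\gamma\sqrt z})E_{tan},E_{tan}\ra\bigr|$, which is precisely where $\gamma\neq1$ enters. Your appeal to geometric control (``the boundary is contained in $|x|\le a$, so all billiard rays escape'') is moreover false as stated: containment in a ball does not preclude trapping, and no non-trapping hypothesis is made or needed. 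Second, the exponent $1/2+\ep$ in $\Lambda_\ep$ is not a consequence of strict dissipativity in a hyperbolic zone forcing $|\re\lambda|\lesssim 1$ (that claim is unproven and much stronger than the theorem); it records how close $\im z$ may approach $0$ on $Z_1$, namely $|\im z|\ge h^{1/2-\ep}$, so that Vodev's approximation $\nc(z,h)\approx Op_h(\rho+hb)$, with its loss $h/\sqrt{|\im z|}$ and symbol classes $S_\delta$, $\delta=1/2-\ep$, remains usable near the characteristic set of the Helmholtz operator.

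Independently of this, your outline never engages the specifically Maxwell, i.e.\ system, difficulty that occupies Sections 2--3 of the paper: the boundary condition couples $E_{tan}$ to $\bigl(\grad_h E_{nor}\bigr)\vert_{tan}$, and one must first show, using $\dive E=0$ and the resulting equation $(D_\nu-\gamma\sqrt z)E_{nor}=O(h)$, that $\|E_{nor}\|_{H^1_h(\Gamma)}\le C h^{1-2\delta}\|E_{tan}\|_{H^1_h(\Gamma)}$, so that the normal component is a negligible perturbation and the problem reduces to a scalar-type elliptic pseudodifferential equation for $E_{tan}$ alone. Also missing are the dichotomy $0<\gamma<1$ versus $\gamma>1$ (forced by connectedness of $\Gamma$), the latter handled by rewriting the boundary condition in terms of $B_{tan}$, $B_{nor}$, and the actual role of $\gamma\neq1$, which is the ellipticity of $\rho-\gamma\sqrt z$ (respectively $\rho-\sqrt z/\gamma$) in the elliptic zone rather than a Lopatinski-type condition in a hyperbolic zone. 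As written, your proposal is a plausible program for a scalar wave equation with dissipative boundary condition, but its key quantitative claims point at the wrong mechanisms and it would not close for the Maxwell system.
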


\begin{figure}[tbp] % float placement: (h)ere, page (t)op, page (b)ottom, other (p)age
  \centering
  \includegraphics[bb=0 0 487 402,width=3.5in,height=2.5in,keepaspectratio]{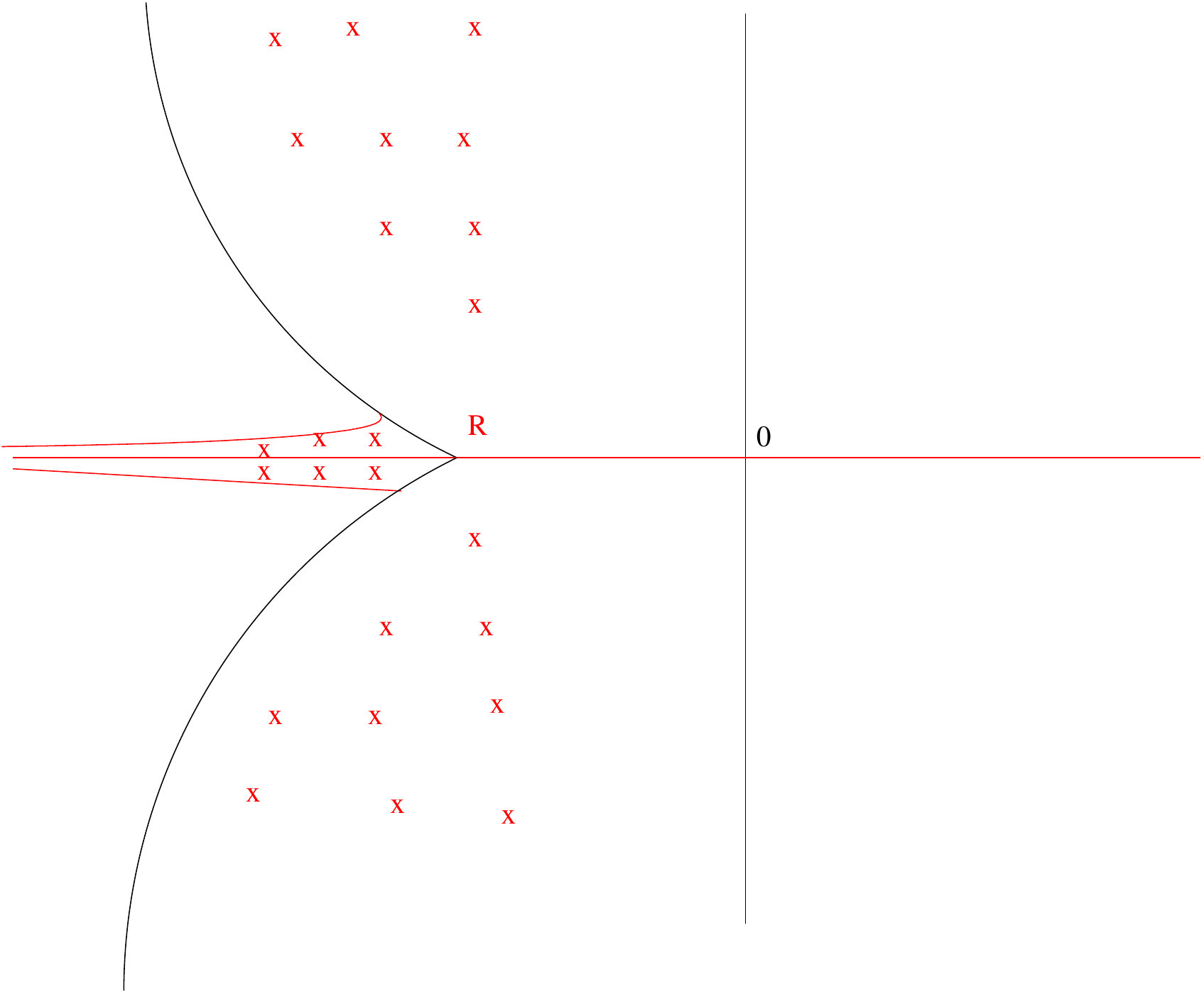}
  \caption{Eigenvalues of $G_b$}
  \label{fig:values5}
\end{figure}

If $\re \lambda < 0$ and $G_b(E, B) =\lambda (E, B)\ne 0$, then 
\begin{equation} 
\begin{aligned} 
&\lambda E = \curl B &{\rm on} &\quad \Omega,
\\
&\lambda B = - \curl E & {\rm on}& \quad \Omega,\\
&\dive E = \dive B = 0,& {\rm on} &\quad \Omega,
\\
&E_{tan} - \gamma(\nu \wedge B_{tan}) = 0 & {\rm on} & \quad \Gamma. 
\end{aligned} 
\label{eq:1.3}
\end{equation}
This implies that $u := (E, B)$ satisfies
$$ 
 \Delta u - \lambda^2 u = 0,\qquad {\rm on}\quad \Omega.
 $$

The eigenvalues of $G_b$ are symmetric with respect to the real axis, so it is sufficient to examine the location of the eigenvalues whose imaginary part is  nonnegative.  
The mapping $z\mapsto z^2$ maps the positive quadrant $\{z \in \C:\re z>0\,,\, \im z>0\}$ bijectively to
the upper half space.  Denote by $\sqrt z$ the inverse map.
The part of the spectral domain
$\{\lambda \in \C:\, \re \lambda<0\,,\, \im \lambda >0\}$
is mapped by $\lambda =\ii \sqrt z$ to the upper half plane $\{z\in \C:\, \im z>0\}$.   
In $\{ z \in \C: \: \im z \geq 0\}$ introduce the sets
\begin{align*}
&
Z_1 
\ :=\  
\{ z\in \C:\: \re z = 1, \ \  h^{\delta} \leq \im z \leq 1\}, \quad
0 < h \ll 1, \quad 0 < \delta < 1/2,
\cr 
&Z_2 \ :=\
 \{ z \in \C: \re z =  - 1, \ \ 0 \leq \im z \leq 1\},
\cr
&Z_3 \ :=\
 \{ z \in \C: |\re z| \leq 1,\ \  \im z = 1\}.
\end{align*}

Set $\lambda = \ii \sqrt{z}/h,\: z \in Z_1 \cup Z_2 \cup Z_3$.
To study the eigenvalues $\lambda,\: |\lambda| > R_0,$ it is sufficient to consider $0< h \ll 1$. 
 As $z$ runs over the rectangle in Figure 2, with $0< h \ll 1$,
$\lambda$ sweeps out the large values in the intersection of left and upper
half planes.  The values of $z\in Z_2$ near the lower left hand corner,
$z=-1$, of the 
 rectangle go the spectral values near the negative real axis.
The spectral analysis near these values in $Z_2$ for
dissipative Maxwell's equations does not have clear analogue with the spectral problems for the 
wave equation with dissipative boundary conditions. In fact, for the wave equation if $0 < \gamma(x) < 1,\: \forall x \in \Gamma,$ the eigenvalues of the
generator of the corresponding semigroup are located in the domain $\Lambda_{\ep}$ (see Section 3, \cite{P} and \cite{M}). For Maxwell's equations the eigenvalues of $G_b$ lie in the domain $\Lambda_{\ep} \cup {\mathcal R}_N$ and for  $0 < \gamma(x) < 1$ and $\gamma(x) > 1$ we have the same location (see Appendix for the case $K = \{x \in \R^3:\:\|x| \leq 1\}$).

\begin{center}
\begin{figure}
\input{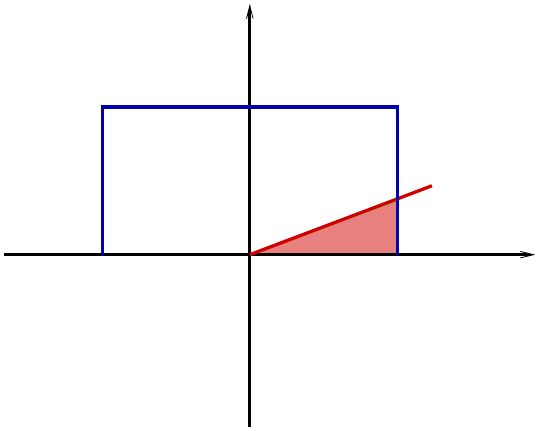_t}
  \caption{Contours $Z_1, Z_2, Z_3, \delta = 1/2 - \epsilon$ }
\end{figure}
\end{center}

Equation (\ref{eq:1.3}) implies
 that on $\Omega$ each eigenfunction $u = (E, B)$ of $G_b$ satisfies
\begin{equation} \label{eq:1.4}
\sqrt{z} \,E\  =\  \frac{h}{\ii} \curl B\,,
\qquad
\sqrt{z} \,B \ =\  -\, \frac{h}{\ii} \curl E\,,
\end{equation}
and therefore $(-h^2 \Delta - z) E = (-h^2 \Delta  - z) B = 0.$ 
For  eigenfunctions $(E, B) \neq 0$, we 
derive  a pseudodifferential system on the boundary involving $E_{tan} = E - \la E, \nu \ra \nu$ and $E_{nor} = \la E, \nu\ra$. 
A semi-classical analysis 
shows that for $z \in Z_1\cup Z_3$ this system implies that for $h$ small enough we have $E\vert_{\Gamma}  = 0$ which yields $E = B = 0$. By scaling one concludes that the eigenvalues $\lambda = \frac{ \ii \sqrt{z}}{h}$ of $G_b$ lie in the region $\Lambda_{\ep} \cup {\mathcal M}$, where
 $${\mathcal M} = \{ z \in \C: \: |\arg z - \pi| \leq\pi/4 ,\: |z| \geq R_0 > 0, \:\re z < 0\}.$$ 
The strategy for the analysis of the case $z \in Z_1 \cup Z_3$ is similar to that exploited in \cite{V} and \cite{P}.  In these papers the semi-classical Dirichlet-to-Neumann map $\nc(z, h)$ plays a crucial role and the problem is reduced to the proof  that some $h-$pseudodifferantial operators is elliptic in a suitable class. For the Maxwell system the pseudodifferential equation on the boundary is more complicated. 
Using the equation $\dive E = 0,$ 
yields a pseudodifferential system for $E_{tan}$ and $E_{nor}.$ 
We show that if $(E, B) \neq 0$ is an eigenfunction of $G_b$, then $\|E_{nor}\|_{H^1_h(\Gamma)}$ is  bounded by $Ch \|E_{tan}\|_{H^1_h(\Gamma)}$.  The term involving $E_{nor}$ then plays the role of a negligible perturbation in the pseudodifferentrial system on the boundary and this reduces the analysis to one involving only $E_{tan}.$ The system concerning $E_{tan}$ has a diagonal leading term and we may apply the same arguments as those of \cite{P} to conclude that $E_{tan} = 0$ and hence $E_{nor} = 0.$\\

The analysis of the case $z \in Z_2$ is more difficult since the principal symbol $g$ of the pseudodifferential system for $E_{tan}$ need not be elliptic at some points (see Section 3). Even where $g$ is elliptic,   if $|\im z| \leq h^{1/2}$  it is 
 difficult to estimate the norm of the  difference $Op_h(g) Op_h(g^{-1}) - I.$ 
 To show that the eigenvalues of $G_b$ lying in ${\mathcal M}$ are in fact confined 
 to the region ${\mathcal R}_N$ for every $N \in \N$, we analyze the real part of the following scalar product in $\lg$
$$
Q(E_0) 
\ :=\  
\re \la (\nc(z, h) - \sqrt{z} \gamma)E_0, E_0 \ra_{\lg}, \qquad
E_0 
\ :=\
 E\vert_{\Gamma}.
$$
 We follow  the approach in \cite{V}, \cite{P} based on a Taylor expansion of $Q(E_0)$ at $z = -1$ and the fact that for $ z= -1 $ we have $Q(E_0) = {\mathcal O}(h^N),\: \forall N \in \N.$ In the Appendix we treat the case when $K = \{x\in \R^3:\: |x|\leq 1\}$ is a ball and $\gamma$ = const. We prove that for $\gamma \equiv 1$ the operator $G$ has no eigenvalues in $\{\re z < 0\}$, while for every $\gamma \in \R^+ \setminus \{1\}$ we have infinite number of real eigenvalues.

\section{Pseudodifferential equation on the boundary}

Introduce geodesic normal  coordinates $(y_1, y') \in \R^3$ on a neighborhood of a 
point $x_0 \in \Gamma$ as follows.  For a point $x$, $y^\prime(x)$ is the closest 
point in $\Gamma$ and $y_1 = {\rm dist}\: (x, \Gamma)$.
Define $\nu(x)$ to be the unit normal in the direction of increasing $y_1$ to the 
surface $y_1={\rm constant}$ through $x$.  Thus $\nu(x)$ is an extension of
the unit normal vector to a unit vector field.  The boundary   $\Gamma$ is mapped to  $y_1 = 0$ and
$$
x \ =\  \alpha(y_1, y') \ =\  \beta(y') \ +\  y_1 \nu(y').
$$
We have
$$
\frac{\pa}{\pa x_k} 
\ =\
  \nu_k(y') \frac{\pa}{\pa y_1} + \sum_{j=2}^3 \frac{\pa y_j}{\pa x_k} \frac{\pa}{\pa y_j},
\qquad
k = 1,2,3.
$$
Moreover, 
$$
\sum_{k=1}^3 \nu_k(y') \frac{\pa y_j}{\pa x_k}(y_1, y') = \la \nu, \frac{\pa y_j}{\pa x}\ra = 0, \qquad
j =1, 2,3,\quad {\rm and}  
$$
$$
\sum_{k=1}^3 \nu_k(x) \pa_{x_k}f(x) = \pa_{y_1}( f(\alpha(y_1, y')).$$
 Since $\|\nu(x)\| = 1$, $\la \nu, \pa_{x_j} \nu\ra = 0,\:j = 1,2,3.$

 A straight forward computation yields
$$ \nu(x) \wedge \frac{h}{\ii} \curl u(x) = \ii h \pa_{\nu} u_{tan} + \Bigl( \la  D_{x_1}u, \nu \ra, \la  D_{x_2} u, \nu \ra, \la D_{x_3}u, \nu \ra \Bigr)\Big\vert_{tan}$$
$$ 
= \ii h \pa_{\nu} u_{tan} + \Bigl(\grad_h \la u, \nu \ra\Bigr)\Big\vert_{tan} -\ii h  g_0(u_{tan}),
\qquad x \in \Gamma,
$$
where 
$$
D_{x_j} = - \ii h \pa_{x_j},\qquad
j = 1, 2, 3,\quad
\grad_h f = \{D_{x_j} f\}_{j = 1,2,3}, 
$$
$$
g_0(u_{tan}) = \{\la u_{tan}, \pa_{x_j} \nu \ra\}_{j=1,2,3}.$$ 
 Setting $E_{nor} = \la E, \nu\ra,$ from (\ref{eq:1.3}) one deduces
$$\nu \wedge B = -\frac{1}{\sqrt{z}} \nu \wedge \frac{h}{\ii}\curl E = \frac{1}{\sqrt{z}} D_{\nu} E_{tan} -  \frac{1}{\sqrt{z}}\Bigl[\Bigl(\grad_h E_{nor}\Bigr)\Big\vert_{tan} -\ii h g_0(E_{tan})\Bigr],$$
where $D_{\nu} = -\ii h\pa_{\nu}$ and the boundary condition in (\ref{eq:1.3}) becomes
\begin{equation} \label{eq:2.1}
 \Bigl( D_{\nu}   - \frac{1}{\gamma}\sqrt{z}\Bigr) E_{tan} - \Bigl(\grad_h E_{nor}\Bigr)\Big\vert_{tan}+ \ii h g_0(E_{tan})= 0,\qquad
 x \in \Gamma.
\end{equation}

Next
$$
\grad_h f(x)\vert_{tan} 
\ =\
 \Big\{\sum_{j=2}^3 \frac{\pa y_j}{\pa_{x_k}} D_{y_j} f(\alpha(y_1, y'))\Big\}_{k=1,2,3}
$$
and for $u = (u_1, u_2, u_3) \in \C^3$,
$$
\frac{h}{\ii} \dive u(\alpha(y_1, y')) = \la D_{y_1} u(\alpha(y_1,y')), \nu (y')\ra + \sum_{k=1}^3 \sum_{j= 2}^3\frac{\pa y_j}{\pa_{x_k}} D_{y_j} u_k(\alpha(y_1, y'))
$$
$$= D_{y_1}\Bigl(u_{nor}(y_1, y')\Bigr) + \sum_{j=2}^3 D_{y_j}  \Bigl\la u_{tan}(\alpha(y_1, y')), \frac{\pa y_j}{\pa x}\Bigr\ra + h \la u_{tan}, Z \ra,$$
where $\la u(\alpha(y_1, y')), \nu(y') \ra: = u_{nor}(y_1, y')$ and $Z$ depends on the second derivatives of $y_j,\: j = 2.3$.  Apply the operator $D_{y_1} - \frac{\sqrt{z}}{\gamma(y')}$ to  $\dive E(\alpha(y_1, y')) = 0$ to find
$$
(D_{y_1}^2 - \frac{\sqrt{z}}{\gamma(y')}D_{y_1}) E_{nor}(y_1, y') + \sum_{j=2}^3  D_{y_j} \Bigl\la (D_{y_1} - \frac{\sqrt{z}}{\gamma(y')}) E_{tan}(\alpha(y_1, y')), \frac{\pa y_j}{\pa x}\Bigr\ra 
$$
$$
= h \la (D_{y_1} - \frac{\sqrt{z}}{\gamma})E_{tan}, Z \ra  + h \la E_{tan}, Z_1 \ra,
$$
where $\gamma(y') := \gamma(\beta(y')).$
  
Taking the trace $y_1 = 0$ and applying the boundary condition (\ref{eq:2.1}), yields
\begin{eqnarray}
\Bigl(D_{y_1}^2 + \sum_{j, \mu = 2}^3 \sum_{k=1}^3\frac{\pa y_j}{\pa x_k} \frac{\pa y_{\mu}}{\pa x_k} D^2_{y_j,y_\mu}\Bigr) E_{nor}(0, y') - \frac{\sqrt{z}}{\gamma(y')} D_{y_1} E_{nor}(0, y')\nonumber \\
 = h \Bigl\la \Bigl(\grad_h E_{nor}\Bigr)\big\vert_{tan}(0, y'), Z \Bigr\ra + h Q_1(E_{tan}(0, y')), \label{eq:2.2}
\end{eqnarray}
with
 $$\|Q_1(E_{tan}(0, y'))\|_{L^2(\R^2)} \leq C_2 \|E_{tan}(0, y')\|_{H^1_h(\R^2)}.$$
 Here $H_h^s(\Gamma),\:s \in \R,$ denotes the semi-classical Sobolev spaces with norm $\|\langle h \pa_x \rangle^s u\|_{\lg}$, $\la h\pa_x \rangle = (1 + \|h \pa_x\|^2)^{1/2}$.  In the exposition below we use the spaces $(L^2(\Gamma))^3$ and $(H^s_h(\Gamma))^3$ of vector-valued functions but we will omit this in the notations writing simply $L^2(\Gamma)$ and $H^s_h(\Gamma)$.\\

The operator $-h^2 \Delta_x- z$ in the coordinates $(y_1, y')$  has the form
$${\mathcal P}(z, h) = D_{y_1}^2 + r(y, D_{y'}) +  q_1(y, D_y) + h^2 \tilde{q}- z$$
 with $  r(y, \eta') = \la R(y)\eta', \eta'\ra, \:q_1(y,\eta) = \langle q_1(y), \eta) \rangle$.
Here 
$$R(y) = \Bigl\{\sum_{k=1}^3 \frac{\pa y_j}{\pa x_k} \frac{\pa y_{\mu}}{\pa x_k}\Bigr\}_{j, \mu = 2}^3 = \Bigl\{\Bigl\la \frac{\pa y_j}{\pa x}, \frac{\pa y_{\mu}}{\pa x} \Bigr\ra\Bigr \}_{j, \mu =2}^3$$
is a symmetric $(2 \times 2)$ matrix  and $r(0, y', \eta') = r_0(y', \eta')$, where $r_0(y', \eta')$ is the principal symbol of the Laplace-Beltrami operator $-h^2\Delta_{\Gamma}$ on $\Gamma$ equipped with the Riemannian metric induced by the Euclidean one in $\R^3.$  We have 
$$
\Bigl({\mathcal P}(z, h) E_{nor}\Bigr)(0, y') \ =\
 \la {\mathcal P}(z, h) E, \nu \ra(0, y') \ +\  h Q_2(E(0, y')),
 $$
where 
$$
\|Q_2(E(0, y'))\|_{L^2(\R^2)} \ \leq\  C_2\, \|E(0, y')\|_{H^1_h(\R^2)}.
$$
Since ${\mathcal P}(z, h) E = 0$, this lets us replace the terms with all second derivatives of $E_{nor}$ in (\ref{eq:2.4}) by $z E_{nor}(0, y')$ modulo terms having a factor  $h$ and containing first order derivatives of $E_{nor}$. This follows from the form of the matrix $R(y)$ given above. After a multiplication by $-\frac{\gamma(y')}{\sqrt{z}}$ the equation (\ref{eq:2.2}) yields 

\begin{equation} \label{eq:2.3}
 (D_{y_1} - \gamma(y') \sqrt{z})E_{nor}(0, y') = h Q_3(E(0, y')),
\end{equation}
where $Q_3(E(0, y'))$  has the same properties as $Q_2(E(0, y')).$\\

Let $\psi(x) \in C_0^{\infty}(\R^3)$ be a cut-off function with support in small neighborhood of $x_0 \in \Gamma$. Replace $E, B$ by $E_{\psi} = E \psi,\: B_{\psi} = B \psi.$ The above analysis works for $E_{\psi}$ and $B_{\psi}$ with lower order terms depending on $\psi$. We obtain
$$\la (D_{\nu} - \gamma(x) \sqrt{z})E\vert_{\Gamma} \psi(x), \nu(x) \ra \ =\  h\, Q_{3, \psi} (E\vert_{\Gamma}). $$
Taking a partition of unity in a neighborhood of $\Gamma$, yields
\begin{equation} \label{eq:2.4}
\la(D_{\nu} - \gamma(x) \sqrt{z})E\vert_{\Gamma}, \nu\ra = h Q_{4} (E\vert_{\Gamma}) ,
\qquad
\|Q_4(E\vert_{\Gamma})\|_{L^2(\Gamma)} \leq C \|E\vert_{\Gamma}\|_{H^1_h(\Gamma)}.
\end{equation}

For $ z \in Z_1 \cup Z_2 \cup Z_3$ let 
$\rho(x', \xi',z) =  \sqrt{z - r_0(x', \xi')} \in C^{\infty}(T^* \Gamma)$
 be the root of the equation 
$$\rho^2 + r_0(x', \xi') - z = 0$$
  with $ \im \rho(x', \xi', z) > 0.$ 
  For large $|\xi'|$,
$$
\rho(x', \xi', z) \ \sim\
 |\xi'|,\qquad
 \im \rho(x', \xi', z) \ \sim\ |\xi'|,
 $$
 while for bounded $|\xi'|$,
$$
\im \rho(x', \xi', z) \ \geq\  \frac{ h^{\delta}}{C}.
$$

 We recall some basic facts about $h$-pseudodifferential operators
 that the reader can find in \cite{DS}.
 Let $X$ be a $C^{\infty}$ smooth compact manifold without
boundary with dimension $d\geq 2$. Let $(x, \xi)$ be the coordinates in $T^*(X)$ and let $a(x, \xi, h) \in C^{\infty}(T^*(X)).$ 
 Given $m \in \R, \: \l \in \R, \delta > 0$ and a function $c(h) > 0$, one denotes by $S_{\delta}^{\l, m}(c(h))$ the set of symbols so that 
$$
 |\pa_{x}^{\alpha} \pa_{\xi}^{\beta} a(x, \xi, h)| \leq C_{\alpha, \beta} (c(h))^{-\l - \delta(|\alpha| + |\beta|)} (1 + |\xi|)^{m - |\beta|},\: \forall \alpha, \forall\beta,\quad
  (x, \xi) \in T^*(X).
 $$
If $c(h) = h$, we denote $S_{\delta}^{\l, m}(c(h))$ simply by $S_{\delta}^{l, m}$. 
Symbols restricted to a domain where $|\xi|\leq C$ will be denoted by $a \in S_{\delta}^l(c(h)).$ 
The $h-$pseudodifferential operator with symbol $a(x, \xi, h)$ acts by 
$$
(Op_h(a) f)(x) \ :=\
 (2 \pi h)^{-d+1}\int_{T^*X} e^{-\ii \langle x - y, \xi \rangle /h} a(x, \xi, h) f(y) dy d \xi.
$$
For matrix valued symbols we use the same definition.
This means that every element of a matrix symbol is in the class $S_{\delta}^{l, m}(c(h)).$\\
Now suppose that $a(x, \xi, h)$ satisfies the estimates
\begin{equation} \label{eq:2.5}
|\pa_x^{\alpha} a(x, \xi, h)| \leq c_0(h) h^{-|\alpha|/2}, \qquad (x, \xi) \in T^*(X)
\end{equation}
for $|\alpha| \leq d-1$, where $c_0(h) > 0$ is a parameter. Then there exists a constant $C > 0$ independent of $h$ such that
\begin{equation} \label{eq:2.6}
\|Op_h(a)\|_{L^2(X) \to L^2(X)} 
\ \leq\
 C \,c_0(h).
\end{equation}
For $0 \leq \delta < 1/2$ products of $h$-pseudodifferential operators are well behaved. If $a \in S_{\delta}^{l_1, m_1}, \: b \in S_{\delta}^{l_2, m_2}$ and $s \in \R$, then
\begin{equation} \label{eq:2.7}
\|Op_h(a) Op_h(b) - Op_h (a b) \|_{H^s(X) \to H^{s - m_1 - m_2+1}(X)} \leq C h^{-l_1 - l_2 - 2\delta +1}.
\end{equation}

Let $u \in \C^3$ be the solution of the Dirichlet problem
\begin{equation} \label{eq:2.8}
 (-h^2 \Delta - z) u = 0 \quad {\rm on}\quad \Omega,
 \qquad
u = F \quad{\rm on} \quad \Gamma. 
\end{equation}
Introduce the semi-classical Dirichlet-to-Neumann map
$$\nc(z, h): H_h^{s}(\Gamma) \ni F \longrightarrow  D_{\nu}u\vert_{\Gamma} \in H^{s-1}_h(\Gamma).$$ 

G. Vodev \cite{V} established for bounded domains $K \subset \R^d, \: d \geq 2,$ with $C^{\infty}$
boundary the following approximation of the interior Dirichlet-to-Neumann map $\nc_{int}(z, h)$ related to (\ref{eq:2.8}), where the equation $(-h^2 \Delta - z)u = 0$ is satisfied in $K$.
\begin{theorem} [\cite{V}] For every $0 < \epsilon \ll 1$ there exists $0 < h_0(\epsilon) \ll 1$ such that for $z \in Z_{1, \ep}: = \{z \in Z_1, \: |\im z| \geq h^{\frac{1}{2}- \epsilon}\}$ and $0 < h \leq h_0(\epsilon)$ we have
\begin{equation} \label{eq:2.9}
\|\nc_{int}(z, h)(F) - Op_h(\rho + h b) F\|_{\h1} \leq \frac{C h}{\sqrt{|\im z|}} \|F\|_{L^2(\Gamma)},
\end{equation}
where $b \in S^0_{0, 1}(\Gamma)$ does not depend on $h$ and $z$. Moreover, $(\ref{eq:2.9})$ holds for $z \in Z_2 \cup Z_3$ with $|\im z|$ replaced by $1.$
\end{theorem}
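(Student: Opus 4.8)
\emph{Sketch of proof.} The plan is to construct, microlocally near $\Gamma$, a semi-classical outgoing parametrix for the interior Dirichlet problem $(-h^2\Delta - z)u = 0$ in $K$, $u|_\Gamma = F$, and to read $\nc_{int}(z,h)$ off from it; this reproduces the strategy of \cite{V}. First I would fix a collar $\{0 \le y_1 < \epsilon_0\}$ of $\Gamma$ inside $K$ in which geodesic normal coordinates $(y_1, y')$ are valid, with $y_1 = \dist(\cdot,\Gamma)$, and in which $-h^2\Delta - z$ has the form $\mathcal P(z,h) = D_{y_1}^2 + r(y, D_{y'}) + q_1(y, D_y) + h^2 \tilde q - z$ with $r(0,y',\eta') = r_0(y',\eta')$, exactly as above. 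After a partition of unity the trace $F$ may be taken supported in a small coordinate patch, and the part of the problem away from this collar is controlled directly by the $L^2$-resolvent bound quoted below together with the exponential decay of the parametrix, so the construction is microlocal near $y_1 = 0$.

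The core step is to factor $\mathcal P(z,h)$, modulo a smoothing operator of size $O(h^\infty)$, in the form
\[
\mathcal P(z,h) \ =\ \bigl(hD_{y_1} + i\,B(y_1,z,h)\bigr)\bigl(hD_{y_1} - i\,B(y_1,z,h)\bigr) \ +\ \mathcal R_\infty,
\]
where for each $y_1$ the operator $B(y_1, z, h) = Op_h(\beta(y_1,y',\eta',z,h))$ acts in $y'$, with symbol $\beta \sim \rho(y_1,y',\eta',z) + h\,b_1 + h^2 b_2 + \cdots$ and $\rho(y_1,\cdot,z) = \sqrt{z - r(y_1,y',\eta')}$ chosen with $\im\rho > 0$; at $y_1 = 0$ this is the root $\rho$ of the statement. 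Expanding the product and matching powers of $h$ turns the factorization into the Riccati relation $-\rho^2 = r - z$ for the leading symbol and a chain of linear transport equations along $y_1$ for $b_1, b_2, \dots$, all solvable since $\rho \ne 0$ throughout $Z_1 \cup Z_2 \cup Z_3$. The branch $\im\rho > 0$ is precisely the one for which $(hD_{y_1} - iB)v = 0$, $v|_{y_1=0} = F$, is solved microlocally to all orders by $v = Op_h(\mathcal A(y_1,z,h))F$ with $\mathcal A(0,\cdot) = I$ and $\mathcal A(y_1,\cdot)$ decaying like $e^{-y_1\im\rho/h}$; then $v$ is the approximate Dirichlet solution and $D_\nu v|_{y_1=0} = iB(0,z,h)F + (\mathrm{error})$, which equals $Op_h(\rho + hb)F$ modulo the remainder. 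Finally I would check, from the explicit first transport equation and the fact that $r_0$ is the principal symbol of $-h^2\Delta_\Gamma$, that $b \in S^0_{0,1}(\Gamma)$ and that the $z$- and $h$-dependence drops out of $b_1(0,\cdot)$, as claimed.

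The quantitative content is the remainder bound, and the exponent $\delta$ governing the relevant symbol classes $S^{\ell,m}_\delta$ is dictated by the worst lower bound on $\im\rho$. For bounded $|\eta'|$ and $z \in Z_1$ one has $\im\rho \gtrsim \sqrt{|\im z|}$ (so on $Z_{1,\ep}$, $\im\rho \gtrsim h^{1/4-\ep/2}$), each step of the transport recursion costs a factor $(\im\rho)^{-1} \lesssim |\im z|^{-1/2}$ and each $\pa_{y'}$ a factor $h^{-\delta}$ with $\delta < 1/2$, which keeps everything inside the range where the composition estimate (\ref{eq:2.7}) and the $L^2$-bound (\ref{eq:2.6}) apply. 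The factor $h/\sqrt{|\im z|}$ in (\ref{eq:2.9}) then comes out of integrating the parametrix profile, $\int_0^{\epsilon_0} e^{-y_1\im\rho/h}\,dy_1 \sim h/\im\rho \lesssim h/\sqrt{|\im z|}$, together with the fact that the construction, carried far enough, leaves an error symbol one power of $h$ down and of low enough order in $|\eta'|$ to map $L^2(\Gamma)$ into $\h1$ with that norm. On $Z_2 \cup Z_3$ one has $\re z = -1$ or $\im z = 1$, hence $z - r_0$ is bounded away from $[0,\infty)$ and $\im\rho \gtrsim 1$ uniformly; every $|\im z|^{-1/2}$ is then an $O(1)$ constant and the same argument gives the bound with $|\im z|$ replaced by $1$.

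It remains to compare the parametrix with the genuine interior DtN map. Since every $z \in Z_{1,\ep} \cup Z_2 \cup Z_3$ satisfies $\im z \ne 0$ or $\re z < 0$, it lies in the resolvent set of the Dirichlet Laplacian on $K$, so $(-h^2\Delta - z)w = g$, $w|_\Gamma = 0$, is uniquely solvable with solution operator bounded on $L^2(K)$ by $C/\dist(z,[0,\infty))$ — i.e. $C/|\im z|$ on $Z_{1,\ep}$ and $O(1)$ on $Z_2 \cup Z_3$. Feeding in the $h^\infty$-smallness of $\mathcal P(z,h)v$ off the collar and the $L^2$-control of the construction errors on it, one gets that $u - v$ is negligible in $\h1$ beyond the stated remainder, and $\nc_{int}(z,h)F - Op_h(\rho+hb)F$ inherits that bound. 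I expect the main obstacle to be uniformity in $z$ as $\im z$ decreases down to $h^{1/2-\ep}$: this is the edge of the calculus $S^{\ell,m}_{1/2-\ep}$, so one must track every $|\im z|^{-1/2}$ and every $h^{-\delta}$ through both the transport recursion and the symbolic composition, and verify that — after the gain of one power of $h$ from the construction — nothing worse than $O(h/\sqrt{|\im z|})$ survives, with no borderline $h^{-\ep}$ or logarithmic loss creeping in.
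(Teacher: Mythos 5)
First, a point of comparison: the paper does not prove this statement at all --- Theorem 2.1 is quoted from Vodev \cite{V} --- so the only meaningful benchmark is the proof in that reference, and your sketch does follow the same strategy (a semiclassical parametrix near $\Gamma$ built from the root $\rho$ with $\im\rho>0$ plus lower-order corrections obtained from transport equations, the exact Dirichlet solution then recovered through resolvent bounds). In outline the route is the right one.

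As a proof, however, the proposal has genuine gaps. (i) The factorization is written with the wrong sign: the principal symbol of $(hD_{y_1}+iB)(hD_{y_1}-iB)$ is $\xi_1^2+\rho^2=\xi_1^2+z-r$, whereas the normal part of the symbol of ${\mathcal P}(z,h)$ is $\xi_1^2+r-z$; one needs the factorization without the factors of $i$, i.e. first-order factors with principal symbols $\xi_1\pm\rho$, and then the microlocal solution behaves like $e^{\ii\rho y_1/h}$, decaying precisely because $\im\rho>0$, while the equation $(hD_{y_1}-\ii B)v=0$ that you actually solve produces $e^{-\rho y_1/h}$, which decays only where $\re\rho>0$. (ii) More seriously, the entire quantitative content of the theorem --- that the remainder is $O(h/\sqrt{|\im z|})$ in $H^1_h(\Gamma)$, uniformly down to $|\im z|=h^{1/2-\epsilon}$ --- is exactly what your last sentence defers rather than proves. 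With $\delta=1/2-\epsilon$ each step of the symbolic iteration gains only $h^{1-2\delta}=h^{2\epsilon}$, so one must run on the order of $1/\epsilon$ transport steps and verify that no loss accumulates; the factor $h/\sqrt{|\im z|}$ does not really come from $\int_0^{\epsilon_0}e^{-y_1\im\rho/h}\,dy_1$ but from the size of the first neglected term of the boundary symbol (of order $h^2|\im z|^{-3/2}\le h|\im z|^{-1/2}$ on $Z_{1,\epsilon}$), and one must also prove --- it is not automatic --- that the $z$-dependent part of the subprincipal symbol can be absorbed into that remainder so that $b$ is independent of $z$ and $h$. Finally, converting the interior estimate $\|u-v\|_{L^2(K)}\lesssim h^{L}/|\im z|$ into control of the $H^1_h(\Gamma)$ norm of $D_\nu(u-v)\vert_\Gamma$ requires semiclassical elliptic regularity near $\Gamma$ and trace inequalities whose losses in powers of $h^{-1}$ and $|\im z|^{-1}$ must be absorbed by taking $L$ large; none of this bookkeeping is carried out. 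These are precisely the points on which the cited proof spends its effort, so the proposal should be regarded as a correct plan rather than a proof.
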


With small modifications (\ref{eq:2.9})  holds for the Dirichlet-to-Neumann map $\nc(z, h)$ related to (\ref{eq:2.8}) (see \cite{P}). Applying (\ref{eq:2.9}) with $\nc(z, h)$ and $F = E_0 = E\vert_{\Gamma}$, we obtain
\begin{equation} \label{eq:2.10}
\Bigl\|\la \nc(z, h) E_0, \nu \ra  - \la Op_h(\rho) E_0, \nu\ra\Bigr\|_{L^2(\Gamma)} \leq \frac{C h}{\sqrt{|\im z|}} \|E_0\|_{\lg}.
\end{equation}
Therefore (\ref{eq:2.4}) yields 
\begin{equation} \label{eq:2.11}
\Bigl\| \la Op_h(\rho) - \gamma \sqrt{z})E_0, \nu\ra - h Q_4(E_0) \Bigl \|_{\lg} \leq \frac{Ch}{\sqrt{|\im z|}} \|E_0\|_{\lg}.
\end{equation}
The commutator $[Op_h(\rho), \nu(x)]$ is a pseudodifferential operator with symbol in $h^{1-\delta}S_{\delta}^{0, 0} $ and so
$$
\|[Op_h(\rho), \nu_k(x)] E_{nor}\|_{H^j_h(\Gamma)} 
\ \leq\  C_2 h^{1-\delta} \|E_{nor}\|_{H^j_h(\Gamma)}, \quad k=1, 2, 3,\quad j= 0, 1.$$
The last estimate combined with (\ref{eq:2.11}) implies
\begin{equation} \label{eq:2.12}
\Bigl\| ( Op_h(\rho) - \gamma \sqrt{z})E_{nor} - h Q_4(E_0) \Bigl \|_{\lg} \leq C_3\Bigl(\frac{h}{\sqrt{|\im z|}} + h^{1- \delta}\Bigr)\|E_0\|_{\lg}.
\end{equation}

\section{Eigenvalues-free regions}

For $z \in Z_{1, \ep}$ we have $\rho \in S_{\delta}^{0, 1}$ with $0 < \delta = 1/2 - \ep < 1/2$, while for $z \in Z_2 \cup Z_3$ we have $\rho \in S_{0}^{0, 1}$ (see \cite{V}). 
Since $\Gamma$ is connected one has either $\gamma(x)  >1$  or $0<\gamma(z) <1$.
We present the analysis in the case where $0<\gamma(x) <1, \: \forall x \in \Gamma$. 
The case $1 < \gamma(x)$ is reduced to this case at the end of the section.  
Clearly, there exists $\ep_0 > 0$ such that 
$$\ep_0 \leq \gamma(x) \leq 1 - \ep_0,\qquad \forall x \in \Gamma.$$

 Combing  (\ref{eq:2.4}) and (\ref{eq:2.9}), 
 yields
$$\|\la (Op_h(\rho) - \gamma(x) \sqrt{z})E_0, \nu(x)\ra \|_{L^2(\Gamma)}\leq C \frac{h}{\sqrt {|\im z}|} \|E_0\|_{\lg} + C_1 h\|E_0\|_{H^1_h(\Gamma)},$$ 
where for $z \in Z_2 \cup Z_3$ we can replace $|\im z|$ by 1. This estimate for $E_0$ and the estimate for the commutator $[Op_h(\rho), \nu_k(x)]$ imply
\begin{equation}
\label{eq:3.1}
\|( Op_h(\rho) - \gamma(x) \sqrt{z}) E_{nor} \|_{L^2(\Gamma)} 
\leq  \frac{C_3\,h}{\sqrt {|\im z}|} \|E_0\|_{\lg} + C_4 h^{1- \delta}\|E_0\|_{\h1}.
\end{equation}

Let $(x', \xi')$ be coordinates on $T^*(\Gamma)$. Consider the symbol 
$$
c(x', \xi', z)
 : =\
 \rho(x', \xi', z) 
 \ -\
  \gamma(x') \sqrt{z},\qquad x' \in \Gamma.
  $$  
  Following the analysis in Section 3, \cite{P}, we know that $c$ is elliptic in the case $0 < \gamma(x' ) < 1$ and if $z \in Z_1$ we have $c \in S_{\delta}^{0, 1},\: |\im z| c^{-1}  \in S_{\delta}^{0, -1}$, while if $z \in Z_2 \cup Z_3$ one gets $c \in S_0^{0, 1}, \: c^{-1} \in S_0^{0, -1}$. This implies
$$
\|Op_h(c^{-1}) Op_h(c) E_{nor}\|_{\h1} 
\ \leq\
 \frac{C}{|\im z|} \|Op_h(c) E_{nor} \|_{\lg}.
 $$
On the other hand, according  to Section 7 in \cite{DS}, the symbol of the operator $Op_h(c^{-1})Op_h(c) - I$ is given by
$$\sum_{j=1}^{N} \frac{(i h)^j}{j !} \sum_{|\alpha| = j} D_{\xi'}^{\alpha} (c^{-1})(x', \xi') D_{y'}^{\alpha}c(y', \eta')\big\vert_{x'= y', \xi'= \eta'} + \tilde{b}_{N}(x', \xi')$$
 $$:= b_N(x', \xi') + \tilde{b}_N(x', \xi'),$$
where
$$
|\pa_{x'}^{\alpha} \tilde{b}_N(x', \xi')| \ \leq\
 C_{\alpha} h^{N(1 - 2 \delta) -s_d- |\alpha|/2}.$$
 Taking into account the estimates for $c^{-1}$ and $c$, and applying (\ref{eq:2.5}), and (\ref{eq:2.6})
 yields
$$
\Bigl\|\Bigl(Op_h(c^{-1}) Op_h(c)- I\Bigr) E_{nor} \Bigr \|_{H^j_h(\Gamma)} \leq C_5 \frac{h}{|\im z|^{2}} \|E_{nor}\|_{H^j_h(\Gamma)}, \quad j = 0, 1.
$$
Repeating the argument in Section 3 in \cite{P} concerning the case $0 < \gamma(x') < 1$, for $z \in Z_1$ and $0 < \delta < 1/2$, one finds
\begin{equation}
\begin{aligned}
\label{eq:3.2}
\|E_{nor}&\|_{\h1} \leq \Bigl\|\Bigl(Op_h(c^{-1}) Op_h(c) - I\Bigr) E_{nor}\Bigr \|_{\h1} + \Bigl\|Op_h(c^{-1}) Op_h(c) E_{nor}\Bigr \|_{\h1}
\cr
& \leq C_6 h^{1-2 \delta} \|E_0\|_{\lg} 
 +
 C_5 h^{1- 2\delta} \|E_{nor}\|_{\h1} + C_7 h^{1-\delta}\|E_0\|_{H^1_h(\Gamma)}.
\end{aligned}
\end{equation}
Clearly, 
$$
\|E_0\|_{H^k_h(\Gamma)} \leq \|E_{tan}\|_{H^k_h(\Gamma)} + B_k \|E_{nor}\|_{H^k_h(\Gamma)},
\qquad k \in \N
$$
with $B_k$ independent of $h$. 
 Hence we can absorb the terms involving the norms of $E_{nor}$ in the right hand side of (\ref{eq:3.2}) choosing $h$ small enough, and we get
\begin{equation} \label{eq:3.3}
\|E_{nor}\|_{\h1} \leq C h^{1- 2\delta} \|E_{tan}\|_{\h1}.
\end{equation}
The analysis of the case $z \in Z_2 \cup Z_3$ is simpler since in the estimates above we have no
 coefficient $|\im z|^{-1}$ and we obtain the same result with a factor $h$ on the right hand side of (\ref{eq:3.3}). \\

With a similar argument it is easy to show that
\begin{equation} \label{eq:3.3bis}
\|E_{nor}\|_{\lg} \ \leq\
 C' h^{1- 2\delta} \|E_{tan}\|_{\lg}.
\end{equation}
In fact from (\ref{eq:2.12}) one obtains
$$\Bigl\|Op_h(c^{-1})\Bigl[ (\ Op_h(\rho) - \gamma \sqrt{z})E_{nor} - h Q_4(E_0)\Bigr] \Bigr\|_{\lg} \leq \frac{C_8}{|\im z|}\Bigl(\frac{h}{\sqrt{|\im z|}} + h^{1- \delta}\|E_0\|_{\lg}\Bigr)$$
and 
$$\|Op_h(c^{-1}) Q_4(E_0)\|_{\lg} \leq \frac{C_9}{|\im z|} \|E_0\|_{\lg}.$$
 Combining these estimates with the estimate of $\|Op_h(c^{-1}) Op_h(c) - I\|_{\lg \to \lg}$ 
yields (\ref{eq:3.3bis}).\\

Going back to the equation (\ref{eq:2.1}), we have
\begin{eqnarray} \label{eq:3.4}
\Bigl( D_{\nu} - \frac{1}{\gamma}\sqrt{z} \Bigr) E = \Bigl( D_{\nu} - \gamma\sqrt{z}\Bigr)E_{nor}\nu -  (\frac{1}{\gamma} -\gamma)\sqrt{z} E_{nor}\nu\nonumber \\
+\ii h g_0(E_{tan}) + \Bigl(\grad_h (E_{nor})\Bigr)\big\vert_{\tan},\qquad x \in \Gamma.
\end{eqnarray}
Notice that for the first term on the right hand side of (\ref{eq:3.4}) we can apply the equality (\ref{eq:2.4}), while for $ E_{nor}$ and $ \Bigl(\grad_h (E_{nor})\Bigr)\big\vert_{\tan}$ we have a control by 
the estimate (\ref{eq:3.3}). Consequently, setting $E_0 = E\vert_{\Gamma}$, the right hand side of (\ref{eq:3.4}) is bounded by $C h^{1- 2\delta} \|E_0\|_{\h1}.$
Next 
$$
1 \ < \ 
\frac{1}{1-\ep_0}
\  \leq\ 
 \frac{1}{\gamma(x)} 
 \ \leq\ 
  \frac{1}{\ep_0},\qquad \forall x \in \Gamma.
  $$ 
  This  corresponds to the case $(B)$ examined in Section 4 of
   \cite{P}. The approximation of the operator $\nc(z, h)$ given by (\ref{eq:2.9}) yields the estimate
\begin{equation} \label{eq:3.5}
\|(Op_h(\rho) - \frac{1}{\gamma}\sqrt{z})E_0\|_{\lg}\  \leq\  C \Bigl(\frac{h}{\sqrt{|\im z|}} \|E_0\|_{\lg} + h^{1- 2\delta} \|E_0\|_{\h1}\Bigr).
\end{equation}
For $z \in Z_1 \cup Z_3$  the symbol 
$$d(x', \xi', z): = \rho(x', \xi', z) - \frac{1}{\gamma(x')} \sqrt{z}$$
 is elliptic (see Section 4, \cite{P}) and $d \in S_{\delta}^{0, 1},\: d^{-1} \in S_{\delta}^{0, -1}.$ Then from  (\ref{eq:3.5}) we estimate $\|E_0\|_{\h1}$ and we obtain $E_0 = 0$ for $h$ small enough. This implies $E = B = 0$.\\

Now recall that we have
$$\re \lambda = - \frac{\im \sqrt{z}}{h}, \: \im \lambda = \frac{\re \sqrt{z}}{h}.$$
Suppose that $z \in Z_1.$ Then

$$|\re \lambda| \geq C (h^{-1})^{1- \delta},\: |\im \lambda| \leq C_1 h^{-1} \leq C_2 |\re \lambda|^{\frac{1}{1-\delta}}.$$
So if 
$$|\re \lambda| \geq C_3 |\im \lambda|^{1 - \delta}, \: \re \lambda \leq -C_4 < 0,$$
 there are no eigenvalues $\lambda = \frac{\ii \sqrt{z}}{h}$ of $G_b$. In the same way we handle the case $z \in Z_3$ and we conclude that if $z \in Z_1 \cup Z_3$ for every $\ep > 0$ the eigenvalues $\lambda = \frac{\ii \sqrt{z}}{h}$ of $G_b$ lie in the domain $\Lambda_{\ep} \cup {\mathcal M},$ where
 
$${\mathcal M}= \{ z \in \C: \: |\arg z - \pi| \leq\pi/4 ,\: |z| \geq R_0 > 0, \:\re z < 0\},$$
$\Lambda_{\ep}$ being the domain introduced in Theorem 1.1. Of course, if we consider the domain 
$$Z_{3, \delta_0} = \{ z \in \C:\: |\re z| \leq 1,  \ \ \im z = \delta_0 > 0\},$$
instead of $Z_3,$ we obtain an eigenvalue-free region with ${\mathcal M}$ replaced by 
$${\mathcal M}_{\delta_0} = \{ z \in \C: \: |\arg z - \pi| \leq {\text arctg}\: \delta_0 ,\: |z| \geq R_0(\delta_0) > 0, \:\re z < 0\}.$$

The investigation of the case $z \in Z_2$ is more  complicated since the symbol $d$ may vanish for $\im z = 0$ and $(x'_0, \xi'_0) \in T^*(\Gamma)$ satisfying the equation
$$\sqrt{1 + r_0(x'_0, \xi'_0)} - \frac{1}{\gamma(x_0')} \ =\ 0.
$$

To cover this case and to prove that the eigenvalues $\lambda = \frac{\ii \sqrt{z}}{h}$ with $z \in Z_2$ are confined in the domain ${\mathcal R}_N, \:\forall N \in \N$, we follow the arguments in \cite{V} and \cite{P}. For $z \in Z_2$ we 
 introduce an operator $T(z, h)$ that yields  a better approximation of $\nc(z, h)$. In fact, $T(z, h)$ is defined by the construction of the semi-classical parametrix in Section 3, \cite{V} for the problem (\ref{eq:2.8}) with $F = E_0$. We refer to \cite{V} for the precise definition of $T(z, h)$ and more details. For our exposition we need the next proposition.  Since $(\Delta - z)E= 0$, as in \cite{V}, we obtain 

\begin{proposition} For $z \in Z_2$ and every $N \in \N$ we have the estimate
\begin{equation} \label{eq:3.6}
\|\nc(z, h)E_0 - T (z, h) E_0\|_{H_h^1(\Gamma)} \leq C_N h^{-s_0} h^N \|E_0\|_{\lg} 
\end{equation}
with constants $C_N, s_0 > 0$, independent of $E_0, h$ and $z$, and $s_0$ independent of $N$.
\end{proposition}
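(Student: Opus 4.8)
\textit{Proof proposal.} The plan is to recapitulate, at the level of a sketch, the semiclassical parametrix construction of G.~Vodev \cite{V} (adapted to $\nc(z,h)$ as in \cite{P}) and then to read off \eqref{eq:3.6} from a standard coercivity/elliptic-regularity/trace argument. I would work in a fixed collar neighbourhood $\mathcal{C}=\{0\le y_1\le\epsilon_0\}$ of $\Gamma$ in the geodesic normal coordinates $(y_1,y')$ of Section 2, in which $-h^2\Delta-z$ becomes $\mathcal{P}(z,h)=D_{y_1}^2+r(y,D_{y'})+q_1(y,D_y)+h^2\tilde q-z$. For $z\in Z_2$ one has $z=-1+\ii\tau$ with $0\le\tau\le1$, so $z-r_0(y',\xi')=-(1+r_0)+\ii\tau$ has negative real part; hence the root $\rho=\sqrt{z-r_0}$ with $\im\rho>0$ satisfies $\im\rho\ge c_0>0$ \emph{uniformly} on $T^*\Gamma$, and $\rho\in S_0^{0,1}$. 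This uniform lower bound is what makes the whole construction uniform in $h$, with no $h^{-\delta}$ losses, and is the reason the case $z\in Z_2$ is amenable to a parametrix at all.

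I would then seek the solution of $\mathcal{P}(z,h)u=0$ in $\mathcal{C}$, $u|_{y_1=0}=E_0$, in the form $u(y_1,\cdot)=Op_h(t(y_1;\cdot,z,h))E_0$ with $t\sim\sum_{j\ge0}h^j t_j(y_1;y',\xi',z)$. Substituting and collecting powers of $h$ yields an eikonal (Riccati) equation for $t_0$, whose admissible branch is governed by $\rho$ and therefore decays like $e^{-y_1\im\rho/h}$ in $y_1$, and linear inhomogeneous transport equations in $y_1$ for the $t_j$, $j\ge1$, with sources built from $t_0,\dots,t_{j-1}$ and from $q_1,\tilde q$. Imposing $t_0|_{y_1=0}=I$ and $t_j|_{y_1=0}=0$ for $j\ge1$, these ordinary differential equations are uniquely solvable on $[0,\epsilon_0]$ with bounds uniform in $h$ (the eikonal factor only contributes decay). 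After Borel summation and truncation at order $N$ one obtains $u_N=Op_h(t^{(N)})E_0$ with $\mathcal P(z,h)u_N=\mathcal O_{L^2(\mathcal C)}(h^N)\|E_0\|_{\lg}$ and $u_N|_{y_1=0}=E_0$, and one sets $T(z,h)E_0:=D_\nu u_N|_{y_1=0}=Op_h(D_{y_1}t^{(N)}|_{y_1=0})E_0$, an $h$-pseudodifferential operator on $\Gamma$ with principal symbol $\rho+hb+\cdots$, consistent with Theorem 2.2.

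Next I would compare with the true solution. Let $\chi\in C_0^\infty(\mathcal C)$ with $\chi\equiv1$ near $\Gamma$, and let $u^\sharp\in L^2(\Omega)$ solve $(-h^2\Delta-z)u^\sharp=0$ on $\Omega$, $u^\sharp|_\Gamma=E_0$, so $\nc(z,h)E_0=D_\nu u^\sharp|_\Gamma$; this $u^\sharp$ exists because $-h^2\Delta-z=-h^2\Delta+1-\ii\tau$ is accretive with $\re\la(-h^2\Delta-z)v,v\ra\ge\|v\|^2$, whence $\|(-h^2\Delta-z)^{-1}\|_{L^2\to L^2}\le1$. With $w:=\chi u_N-u^\sharp$ we have $w|_\Gamma=0$ and $(-h^2\Delta-z)w=\chi\,\mathcal P(z,h)u_N+[-h^2\Delta,\chi]u_N$: the first term is $\mathcal O_{L^2}(h^N)\|E_0\|_{\lg}$ by the previous step, while the second is supported where $\chi'\ne0$, i.e. where $y_1\asymp\epsilon_0$, and there $u_N$ and all its derivatives are $\mathcal O(h^\infty)\|E_0\|_{\lg}$ because the factor $e^{-y_1\im\rho/h}$ dominates every power of $h$ and of $\la\xi'\ra$. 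Hence $\|w\|_{L^2(\Omega)}\le C_N h^N\|E_0\|_{\lg}$. Since $w$ solves an elliptic equation with such a right-hand side (in a fixed number of semiclassical Sobolev norms) and homogeneous Dirichlet data, semiclassical elliptic estimates up to $\Gamma$ together with the semiclassical trace theorem give $\|D_\nu w|_\Gamma\|_{\h1}\le C_N h^{-s_0}h^N\|E_0\|_{\lg}$. Because $\chi\equiv1$ near $\Gamma$, $D_\nu w|_\Gamma=D_\nu u_N|_\Gamma-D_\nu u^\sharp|_\Gamma=T(z,h)E_0-\nc(z,h)E_0$, which is \eqref{eq:3.6} after relabelling $N$; it applies in particular to $E_0=E|_\Gamma$.

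The step I expect to be the main obstacle is the bookkeeping of powers of $h$ in the last paragraph: one must guarantee that converting the $\mathcal O(h^N)$ bound in $L^2(\Omega)$ into an $H^1_h(\Gamma)$ bound on the normal derivative costs only a \emph{fixed} power $h^{-s_0}$, with $s_0$ independent of $N$. This is precisely where $\re z=-1$ being bounded away from $0$ is used — it makes $-h^2\Delta-z$ uniformly coercive, so its near-boundary elliptic and trace estimates lose a power of $h$ depending only on the fixed order of the norms involved, not on $N$ — together with the fact that for $z\in Z_2$ the symbols $\rho$ and $t_j$ lie in the good class $S_0$, so the symbolic calculus in the parametrix produces composition errors with no accumulation of $h^{-\delta}$ factors; solving $N$ transport equations therefore does not degrade $s_0$.
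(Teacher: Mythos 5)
Your proposal is correct and follows essentially the same route as the paper, whose proof of Proposition 3.1 consists precisely in invoking the semiclassical parametrix construction of \cite{V} (adapted to the exterior Dirichlet-to-Neumann map as in \cite{P}) and the resulting $\mathcal{O}(h^{-s_0}h^N)$ comparison between $\nc(z,h)$ and $T(z,h)$. Your sketch --- uniform ellipticity of $\rho$ for $z\in Z_2$, the WKB/transport construction in a collar, and the comparison with the true solution via coercivity of $-h^2\Delta-z$ plus fixed-loss elliptic and trace estimates --- is a faithful reconstruction of that cited argument.
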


\begin{proof}[Proof of Theorem 1.1 in the case $z \in Z_2$]
Consider the system
\begin{equation}\label{eq:3.7}
\begin{cases} 
 \Bigl(D_{\nu}   - \frac{1}{\gamma}\sqrt{z}\Bigr) E_{tan} - \Bigl(\grad_h E_{nor}\Bigr)\Big\vert_{tan}+ \ii h g_0(E_{tan})= 0,\quad x \in \Gamma,\\
\dive_h E_{tan} + \dive_h \Bigl(E_{nor} \nu\Bigr) = 0,\quad x \in \Gamma,
\end{cases}
\end{equation}
where $\dive_h F = \sum_{k = 1}^3 D_{x_k} F_k.$

Take the scalar product $\la , \ra_{\lg}$ in $L^2(\Gamma)$ of the first equation of (\ref{eq:3.7}) and $E_{tan}$. Applying Green formula, it easy to see that
\begin{equation} \label{eq:3.8}
- \re \la \grad_h E_{nor}\Bigl\vert_{tan}, E_{tan} \ra_{\lg} = - \re \la \dive_h E_{tan}, E_{nor}\ra_{\lg}.
\end{equation}
We claim that 
\begin{equation} \label{eq:3.9}
\im \la g_0(E_{tan}), E_{tan}\ra_{\lg} = 0.
\end{equation}
Let $E_{tan}= (w_1, w_2, w_3).$ Then
$$\la g_0(E_{tan}), E_{tan}\ra_{\C^3} = \sum_{k, j = 1}^3 w_k \frac{\pa \nu_k}{\pa x_j } \overline{w_j} =  \frac{1}{q}\sum_{k, j = 1}^3 w_k \frac{\pa V_k}{\pa x_j} \overline{w_j} =\frac{1}{q} \la S w, w \ra_{\C^3},$$
 where $S : = \{\frac{\pa V_k}{\pa x_j}\}_{k, j= 1}^3$ with $V(x) = q(x) \nu(x),\: q(x) > 0$ because $\sum_{k=1}^3 (\pa_{x_j}q)w_k \nu_k = 0$. Thus if the boundary is given locally by $x_3 = G(x_1, x_2)$, we choose $V(x) = (- \pa_{x_1}G, -\pa_{x_2} G, 1)$ and it is obvious that $S$ is symmetric. Therefore $\im \la S w, w \ra_{\C^3} = 0$ and this proves the claim.  Hence (\ref{eq:3.9}) implies
\begin{equation} \label{eq:3.10}
\re [ \ii h \la g_0(E_{tan}), E_{tan}\ra_{\lg}] = 0.
\end{equation}
From the $L^2(\Gamma)$ scalar product of the second equation in (\ref{eq:3.7}) with $E_{nor}$, we obtain
\begin{equation} \label{eq:3.11}
 \re \la \dive_h E_{tan}, E_{nor}\ra_{\lg} +  \re  \la D_{\nu} E_{nor}, E_{nor}\ra_{\lg} = 0.
\end{equation}
In fact,
$$\dive_h (E_{nor} \nu) = D_{\nu} E_{nor} - \ii h E_{nor} \dive \nu$$
and $\im \Bigl(\dive \nu |E_{nor}|^2\Bigr) = 0.$

Taking together (\ref{eq:3.8}), (\ref{eq:3.10}) and (\ref{eq:3.11}), we conclude that 
$$\re \Bigl[ \la (D_{\nu} - \frac{\sqrt{z}}{\gamma}) E_{tan}, E_{tan} \ra_{\lg} + \la D_{\nu}E_{nor}\nu, E_{nor}\nu\ra_{\lg}\Bigr] $$ 
$$= \re \Bigl \la D_{\nu} E, E \Bigl \ra _{\lg} - \re \la \frac{\sqrt{z}}{\gamma} E_{tan}, E_{tan} \ra_{\lg} = 0.$$ 

Here we have used the fact that 
$$\la D_{\nu} E_{tan}, E_{nor} \nu\ra_{\C^3} = D_{\nu}\Bigl( \la E_{tan}, E_{nor} \nu\ra_{\C^3}\Bigr) = 0.$$
Applying Proposition 3.1 with $E\vert_{\Gamma} = E_0$, yields
\begin{equation} \label{eq:3.12}
\Bigl|\re \Bigl\la T(z, h) E_0, E_0 \Bigr\ra_{\lg} - \re \Bigl\la \frac{\sqrt{z}}{\gamma} E_{tan}, E_{tan}\Bigr \ra_{\lg}\Bigr|\leq C_N h^{-s_0} h^N \|E_0\|_{\lg}.
\end{equation}
For $z = -1$, as in Lemma 3.9 in \cite{V} and Lemma 4.1 in \cite{P}, we have 
$$|\re \la T(-1, h) E_0, E_0 \ra_{\lg} | \leq C_N h^{-s_0 + N}\|E_0\|^2_{\lg} = 0.$$ 
Consequently, by using Taylor formula for the real-valued function
$$\re \Bigl[\Bigl\la T(z, h) E_0, E_0 \Bigr\ra_{\lg} -  \Big\la \frac{\sqrt{z}}{\gamma} E_{tan}, E_{tan} \Big\ra_{\lg}\Bigr],$$
 we get for every $N \in \N$ the estimate
\begin{eqnarray}
 \label{eq:3.13}
 \Bigl|\im \Bigl[
 \Big\la (\frac{\pa T}{\pa z}(z_t, h))E_0, E_0 
 \Big\ra_{\lg} - 
 \Big\la\frac{\gamma_1}{2\sqrt{z_t}} E_{tan}, E_{tan}
 \Big\ra_{\lg} \Bigr]\Bigr|\nonumber \\
\leq C_N \frac{ h^{-s_0 + N}}{|\im z|} \|E_0\|^2_{\lg},
\end{eqnarray}
where $z_t = -1 + \ii t \im z,\: 0 < t < 1, \: \gamma_1 = \gamma^{-1}.$

According to  Lemma 3.9 in \cite{V}, in (\ref{eq:3.13}) we can replace $\frac{\pa T}{\pa z}(z_t, h)$ by $Op_h(\frac{\pa \rho}{\pa z}(z_t))$ and this yields an error term bounded by $C h \|E_0\|^2_{H^{-1}_h(\Gamma)}$. On the other hand,
$$\Bigl|\Bigl\la Op_h( \frac{\pa \rho}{\pa z}(z_t))E_{tan}, E_{nor}\nu\Bigr\ra _{\lg} + \Bigl\la Op_h( \frac{\pa \rho}{\pa z}(z_t))E_{nor}, E_{tan}\nu\Bigr\ra _{\lg}\Bigr|$$
$$\leq C h \|E_0\|_{\lg}^2$$
since the estimate (3.4) holds for $z \in Z_2$ with factor $h$ and $\frac{\pa \rho}{\pa z}(z_t) \in S^{0, -1}_0.$

Thus  the problem is reduced to a lower bound of 
$$J : = \Bigl|\im \Bigl[\la \Bigl(Op_h( \frac{\pa \rho}{\pa z}(z_t))- \frac{\gamma_1}{2 \sqrt{z}}\Bigr) E_{tan}, E_{tan} \ra_{\lg} + \la Op_h( \frac{\pa \rho}{\pa z}(z_t))E_{nor} \nu, E_{nor}\nu\ra_{\lg}\Bigr]\Bigr| $$
$$ \geq \Bigl|\im\la \Bigl(Op_h( \frac{\pa \rho}{\pa z}(z_t))- \frac{\gamma_1}{2 \sqrt{z}}\Bigr) E_{tan}, E_{tan} \ra_{\lg}\Bigr| - C_1 \|E_{nor}\|^2_{\lg}.$$
Since $\gamma_1(x) > 1, \: \forall x\in \Gamma$,  applying the analysis of Section 4 in \cite{P} for the scalar product involving $E_{tan}$, one deduces 
$$\Bigl|\im \Bigl\la \Bigl(Op_h( \frac{\pa \rho}{\pa z}(z_t))- \frac{\gamma_1}{2 \sqrt{z}}\Bigr) E_{tan}, E_{tan}\Bigr \ra_{\lg}\Bigr| \geq \eta_1 \|E_{tan}\|^2_{\lg},\quad \eta_1 > 0.$$
By using once more the estimate (\ref{eq:3.3bis}), for $h$ small enough we obtain
$$J \geq \eta_1\Bigl (\|E_{tan}\|^2_{\lg} + \|E_{nor}\|^2_{\lg}\Bigr) - B_0 h \|E_{tan}\|^2_{\lg} \geq \eta_2 \|E_0\|^2_{\lg}, \ \ 0 < \eta_2 < \eta_1.$$
Consequently,  (\ref{eq:3.13}) yields
$$(\eta_2 - B_1 h)\|E_0\|^2_{\lg}  \leq C_N \frac{ h^{-s_0 + N}}{|\im z|} \|E_0\|^2_{\lg}$$
and for small $h$ we conclude that for $z \in Z_2$ the eigenvalues $\lambda = \frac{\ii \sqrt{z}}{h}$ of $G_b$ lie in the region ${\mathcal R}_{N}.$ This completes the analysis of the case $0 < \gamma(x) < 1,\: \forall x \in \Gamma.$\\

To study the case $\gamma(x) > 1, \: \forall x \in \Gamma,$ we write the boundary condition in (\ref{eq:1.1}) as
$$\frac{1}{\gamma(x)} (\nu \wedge E_{tan})  - (\nu \wedge (\nu \wedge B_{tan})) =  \frac{1}{\gamma(x)} (\nu \wedge E_{tan}) + B_{\tan} = 0.$$
Next
 $$\nu \wedge E = \frac{1}{\sqrt{z}} \nu \wedge \frac{h}{\ii}\curl B = -\frac{1}{\sqrt{z}} D_{\nu} B_{tan} +  \frac{1}{\sqrt{z}}\Bigl[\Bigl(\grad_h B_{nor}\Bigr)\Big\vert_{tan} -\ii h g_0(B_{tan})\Bigr]$$
and one obtains
\begin{equation} \label{eq:3.15}
 \Bigl( D_{\nu}   - \gamma(x)\sqrt{z}\Bigr) B_{tan} - \Bigl(\grad_h B_{nor}\Bigr)\Big\vert_{tan}+ \ii h g_0(B_{tan})= 0,\: x \in \Gamma
\end{equation}
which is the same as  (\ref{eq:2.1}) with $E_{tan}$, $E_{nor}$ replaced respectively by $B_{tan},\: B_{nor}$ and $\frac{1}{\gamma(x)}$ replaced by $\gamma(x) > 1.$  We apply the operator $D_{y_1} - \gamma \sqrt{z}$ to the equation $\dive B = 0$ and repeat without any change the above analysis concerning $E_{tan}, \: E_{nor}.$ Thus the proof of Theorem 1.1 is complete. \end{proof}

\begin{remark}
The result of Theorem 1.1 holds for obstacles $K = \cup_{j= 1}^J K_j$, where $K_j,\: j = 1,...,J$ are open connected domains with $C^{\infty}$ boundary and $K_i \cap K_j = \emptyset,\: i \neq j.$ Let $\Gamma_j = \pa K_j, \: j = 1,...,J.$ In this case we may have $\gamma(x) < 1$ for some obstacles $\Gamma_j$ and $\gamma(x) > 1$ for other ones. The proof extends with only minor modifications.  The construction of the semi-classical parametrix in \cite{V} is local 
and for the Dirichlet-to-Neumann map $\nc_j(z, h)$ related to $\Gamma_j$ we get the estimate
$$\|\nc_j(z, h)(F) - Op_h(\rho + h b) F\|_{H^1_h(\Gamma_j)} \leq \frac{C h}{\sqrt{|\im z|}} \|F\|_{L^2(\Gamma_j)}.$$
The boundary condition in (\ref{eq:1.1}) is local and we can reduce the analysis to a fixed obstacle $K_j$. If $(E, B) \neq 0$ is an eigenfunction of $G_b$, our argument implies $E_{tan} = 0$ for  $x \in \Gamma_j$ if $0 < \gamma(x) < 1$ on $\Gamma_j$ and $B_{tan} = 0$ for  $x \in \Gamma_j$ in the case $\gamma(x) > 1$ on $\Gamma_j.$ By the boundary condition we get $E_{tan} = 0$ on $\Gamma$ and this yields $E=B =0$ since the Maxwell system with boundary condition
$E_{tan} = 0$ has no eigenvalues in $\{z\in \C:\: \Re z < 0\}.$
\end{remark}

\section{Appendix}
\def\hn{h^{(1)}_n}

In this Appendix, assume that $\gamma > 0$ is constant. 
Our purpose is to study  the  eigenvalues of $G_b$ in case the obstacle is equal to the 
 ball $B_3 = \{x \in \R^3: |x| \leq 1\}$. Setting $\lambda = \ii \mu$, $\im \mu > 0,$ 
 an eigenfunction $(E, B) \neq 0$ of $G_b$ 
 satisfies
\begin{equation} \label{eq:4.1}
\curl E = - \ii \mu B,\qquad  \curl B = \ii \mu E.
\end{equation}
Replacing $B$ by $H = - B$ yields for $(E, H) \in (H^2(|x| \leq 1))^6$,
\begin{equation} \label{eq:4.2}
\begin{cases}\curl E =  \ii \mu H,\qquad  \curl H = -\ii \mu E,\quad {\rm for}\quad x\in B_3,  \\
E_{tan} + \gamma (\nu \wedge H_{tan})= 0,\quad {\rm for }\quad x \in \S^2.
\end{cases}
\end{equation}
Expand $E(x), H(x)$ in
 the spherical functions $Y_n^m(\omega), \: n = 0, 1, 2,..., \: |m| \leq n,\: \omega \in \S^2$ and the modified
Hankel functions $\hn(z)$ of first kind. 
An application of Theorem 2.50 in \cite{KH} (in the notation of \cite{KH} it is necessary to replace $\omega$ by $\mu \in \C \setminus \{0\}$) says that the solution of the system (\ref{eq:4.2}) for $|x| = r = 1$ has the form
$$E_{tan}(\omega) = \sum_{n=1}^{\infty}\sum_{|m|\leq n} \Bigl[\alpha_n^m \Bigl(\hn(\mu) + \frac{d}{dr}\hn(\mu r)\vert_{r = 1}\Bigr) U_n^m (\omega) + \beta_n^m \hn(\mu) V_n^m(\omega)\Bigr],$$
 
$$H_{tan}(\omega) = -\frac{1}{\ii \mu} \sum_{n=1}^{\infty}\sum_{|m|\leq n} \Bigl[\beta_n^m \Bigl(\hn(\mu) + \frac{d}{dr}\hn(\mu r)\vert_{r = 1}\Bigr) U_n^m (\omega) + \mu^2\alpha_n^m \hn(\mu) V_n^m(\omega)\Bigr].$$

Here $U_n^m (\omega) = \frac{1}{\sqrt{n(n+1)}} \grad_{\S^2} Y_n^m(\omega)$ and $V_n^m(\omega) = \nu \wedge U_n^m(\omega)$ for $ n \in \N, -n \leq m \leq n$ form a complete orthonormal basis in
$$L^2_t(\S^2) = \{ u \in (L^2(\S^2))^3:\: \la \nu, u \ra = 0\: {\rm on}\: S^2\}.$$
To find a representation of $\nu \wedge H_{tan}$,  
observe that $\nu \wedge (\nu \wedge U_n^m) = - U_n^m,$ so
$$(\nu \wedge H_{tan})(\omega) =  -\frac{1}{\ii \mu} \sum_{n=1}^{\infty}\sum_{|m|\leq n} \Bigl[\beta_n^m \Bigl(\hn(\mu) + \frac{d}{dr}\hn(\mu r)\vert_{r = 1}\Bigr) V_n^m (\omega) - \mu^2\alpha_n^m \hn(\mu) U_n^m(\omega)\Bigr]$$
and the boundary condition in (\ref{eq:4.2}) is satisfied if
\begin{equation} \label{eq:4.3}
\alpha_n^m \Bigl[ \hn(\mu) + \frac{d}{dr}(\hn(\mu r))\vert_{r = 1} - \gamma \ii \mu \hn(\mu)\Bigr] = 0,\: \forall n \in \N, \: |m|\leq n,
\end{equation}
\begin{equation}\label{eq:4.4}
-\frac{\beta_n^m \gamma}{\ii \mu}\Bigl[  \hn(\mu) + \frac{d}{dr}(\hn(\mu r))\vert_{r = 1} - \frac{\ii \mu}{\gamma} \hn(\mu)\Bigr] = 0,\: \forall n \in \N, \: |m|\leq n.
\end{equation}
For $\gamma \equiv 1$, there are no  eigenvalues. 

\begin{proposition} For $\gamma \equiv 1$ the operator $G_b$ has no eigenvalues in $\{\re z < 0\}.$
\end{proposition}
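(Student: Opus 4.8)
The plan is to show that for $\gamma \equiv 1$ the bracketed factors appearing in \eqref{eq:4.3} and \eqref{eq:4.4} coincide and cannot vanish for any $\mu$ with $\im \mu > 0$, so the only solution of \eqref{eq:4.2} is $E = H = 0$. When $\gamma = 1$ both brackets become
$$
P_n(\mu) \ :=\ \hn(\mu) \ +\ \frac{d}{dr}\bigl(\hn(\mu r)\bigr)\big\vert_{r=1} \ -\ \ii \mu\, \hn(\mu),
$$
so all coefficients $\alpha_n^m,\beta_n^m$ must vanish unless $P_n(\mu)=0$ for some $n\ge 1$. Thus it suffices to prove $P_n(\mu)\neq 0$ whenever $\im \mu>0$. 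First I would rewrite $P_n$ in terms of the modified spherical Hankel function: with $\hn(w)$ the first-kind Hankel function (so $\hn(w)\sim e^{\ii w}$ up to polynomial factors as $|w|\to\infty$ in $\im w>0$, and $\hn$ solves the spherical Bessel equation), one has $\frac{d}{dr}\hn(\mu r)\vert_{r=1}=\mu\,(\hn)'(\mu)$, hence $P_n(\mu)=(1-\ii\mu)\hn(\mu)+\mu (\hn)'(\mu)$.

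The key step is an energy/Rellich-type identity rather than a direct special-function estimate, since the latter is delicate near $\mu$ real. I would argue as follows: if $P_n(\mu)=0$ for some $n\ge 1$ and some $\mu$ with $\im\mu>0$, then \eqref{eq:4.2} has a nontrivial solution $(E,H)$ on the ball with the stated boundary condition. Take the $L^2(B_3)$ scalar product of $\curl E=\ii\mu H$ with $\bar H$ and of $\curl H=-\ii\mu E$ with $\bar E$, integrate by parts, and use the vector Green identity $\int_{B_3}(\curl E\cdot\bar H - E\cdot\curl\bar H)\,dx=\int_{\S^2}\la \nu\wedge E,\bar H\ra\,dS$. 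Combining the two identities gives
$$
\ii\mu\int_{B_3}\bigl(|H|^2+|E|^2\bigr)\,dx \ =\ -\int_{\S^2}\la \nu\wedge E_{tan},\bar H_{tan}\ra\,dS.
$$
Now insert the boundary condition $E_{tan}=-\gamma(\nu\wedge H_{tan})=-(\nu\wedge H_{tan})$; since $\nu\wedge(\nu\wedge H_{tan})=-H_{tan}$ one computes $\nu\wedge E_{tan}=-\nu\wedge(\nu\wedge H_{tan})=H_{tan}$, so the right-hand side equals $-\int_{\S^2}|H_{tan}|^2\,dS$. Taking imaginary parts,
$$
\re\mu\,\int_{B_3}\bigl(|E|^2+|H|^2\bigr)\,dx \ =\ 0,\qquad
\im\mu\,\int_{B_3}\bigl(|E|^2+|H|^2\bigr)\,dx\ =\ -\int_{\S^2}|H_{tan}|^2\,dS .
$$
The second identity already forces a contradiction: the left-hand side is strictly positive (as $\im\mu>0$ and $(E,H)\not\equiv 0$) while the right-hand side is $\le 0$; if instead $E\equiv H\equiv 0$ on $B_3$ then the corresponding $\alpha_n^m,\beta_n^m$ vanish. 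Hence no $\mu$ with $\im\mu>0$ can make any $P_n$ vanish, so $G_b$ has no eigenvalue in $\{\re z<0\}$.

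I expect the main obstacle to be purely bookkeeping: making sure the boundary term carries the correct sign (the interplay of $\nu$ pointing out of $B_3$, the orientation conventions in \eqref{eq:4.2} after the substitution $H=-B$, and the identity $\nu\wedge(\nu\wedge u_{tan})=-u_{tan}$), and confirming that $(E,H)\in (H^2)^6$ justifies the integrations by parts. A secondary, and arguably cleaner, route that avoids \eqref{eq:4.2} altogether is to work directly with the Wronskian: using the fact that $\hn$ and its complex conjugate $\overline{\hn(\bar\mu)}$ (i.e.\ the second-kind Hankel function) have a nonvanishing Wronskian, one shows $\im\bigl(\mu\,\overline{\hn(\mu)}\,(\hn)'(\mu)\bigr)$ has a fixed sign for $\im\mu>0$, whence $\re\bigl(\overline{\hn(\mu)}P_n(\mu)\bigr)$ or $\im\bigl(\overline{\hn(\mu)}P_n(\mu)\bigr)$ is nonzero and in particular $P_n(\mu)\neq 0$. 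I would present the energy argument as the main proof, since it is self-contained given \eqref{eq:4.2}, and only remark on the Wronskian computation.
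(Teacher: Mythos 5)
The energy argument you propose cannot work, and the reason is structural, not just bookkeeping. A correct integration by parts only reproduces the dissipativity of $G_b$, i.e.\ $\re\lambda\le 0$, and can never exclude eigenvalues in the open half plane $\re\lambda<0$: note that your computation nowhere uses $\gamma=1$ in an essential way, so run verbatim with a general constant $\gamma$ it would "prove" that there are no eigenvalues for every $\gamma>0$, contradicting Proposition 4.2 of the paper, which exhibits infinitely many real eigenvalues with $\re\lambda<0$ for every constant $\gamma\neq 1$. Concretely, two signs go wrong. First, the volume term is not $\ii\mu\int(|E|^2+|H|^2)$: since $\curl\bar H=\ii\bar\mu\bar E$, Green's identity gives $\ii\mu\int|H|^2-\ii\bar\mu\int|E|^2$, whose real part is $-\im\mu\int(|E|^2+|H|^2)$ (and whose imaginary part involves $|H|^2-|E|^2$, so your first displayed consequence is also not available). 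Second, the eigenfunctions of $G_b$ live in the exterior domain $\Omega=\{|x|>1\}$, not in the ball: the expansion in $h^{(1)}_n(\mu r)$ is singular at the origin and decays exponentially at infinity for $\im\mu>0$, so the identity must be applied on $\Omega$, where the outward normal of the integration domain is $-\nu$. Carrying this out correctly (the boundary term at infinity vanishes) gives, for the boundary condition $E_{tan}=-\gamma(\nu\wedge H_{tan})$,
\begin{equation*}
\im\mu\int_{\Omega}\bigl(|E|^2+|H|^2\bigr)\,dx \ =\ \gamma\int_{\S^2}|H_{tan}|^2\,dS\ \ge\ 0 ,
\end{equation*}
which is perfectly consistent and yields no contradiction; it is just the statement $\re\lambda\le0$ already known from the contraction property.

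The paper's proof is of a completely different nature and genuinely uses $\gamma=1$: writing $h^{(1)}_n(\mu)=(-\ii)^{n+1}\frac{e^{\ii\mu}}{\mu}R_n\bigl(\frac{\ii}{2\mu}\bigr)$ with $R_n$ a polynomial, the vanishing of the bracket in (4.3)--(4.4) for $\gamma=1$ reduces to $R_n'(w)=0$ with $w=\frac{\ii}{2\mu}$, $\re w>0$; Macdonald's theorem (zeros of $h^{(1)}_n$ lie in $\im z<0$) shows $R_n$ has no zeros in $\re w\ge0$, and Gauss--Lucas then excludes zeros of $R_n'$ there, so all coefficients $\alpha_n^m,\beta_n^m$ vanish. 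Your alternative Wronskian remark points in a more promising direction than the energy identity, but as stated it is only an assertion; you would have to actually prove the sign claim for $\im\bigl(\mu\,\overline{h^{(1)}_n(\mu)}\,(h^{(1)}_n)'(\mu)\bigr)$ in $\im\mu>0$, and the argument must again break down for $\gamma\neq1$, which your sketch does not make visible.
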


\begin{proof} 
The functions $\hn(z)$ have the form (see for example \cite{O})
$$h^{(1)}_n(x)
\  =\
 (-\ii)^{n+1} \frac{e^{\ii x}}{x} \sum_{m=0}^n \frac{\ii^m}{m!(2x)^m} \frac{(n+m)!}{(n-m)!}
 \  =\ (-\ii)^{n+1} \frac{e^{ix}}{x} R_n\Bigl(\frac{\ii}{2 x}\Bigr)$$
with
$$R_n(z)\ : =\
 \sum_{m=0}^n \frac{z^m}{m!} \frac{(n+m)!}{(n-m)!} \ =\
  \sum_{m= 0}^n a_m z^m.$$
Therefore the term in the brackets $[...]$ in (\ref{eq:4.3}) becomes
$$
(1 - \gamma) \ii \mu R_n\Bigl(\frac{\ii}{2 \mu}\Bigr) \  -\
 \sum_{m=0}^n a_m m \Bigl(\frac{\ii}{2 \mu}\Bigr)^{m}\,.
 $$
Setting $w = \frac{\ii}{2 \mu},$ we must study for $\re w > 0$ the roots of the equation
\begin{equation} \label{eq:4.5}
g_n(w)\ : =\
 \frac{1 - \gamma}{2 w} R_n(w) + w R_n'(w) \ =\  0\,.
\end{equation}
For $\gamma = 1$ one obtains $R_n'(w) = 0.$ A result of Macdonald says that the zeros of the function $\hn(z)$ lie in the half plane $\im z < 0$ (see Theorem 8.2 in \cite{O}), hence $R_n(w) \neq 0$ for $\re w \geq 0.$ By the theorem of Gauss-Lucas we deduce that the roots of $R_n'(w) = 0$ lie in the convex hull of the set of the roots of $R_n(w) = 0,$ so $R_n'(w) \neq 0$ for $\re w > 0.$ Consequently, (\ref{eq:4.3}) and (\ref{eq:4.4}) are satisfied
only for $\alpha_n^m = \beta_n^m = 0$ and $E_{tan} = 0.$ This implies $E = H = 0.$ \end{proof}

For the case $\gamma \neq 1$, there are an infinite number of real eigenvalues.

\begin{proposition} Assume that $\gamma \in \R^+ \setminus \{1\}$ is a constant. Then $G_b$ has an infinite number of real eigenvalues. Let $\gamma_0 = \max\{ \gamma, \frac{1}{\gamma}\}.$ Then all real eigenvalues $\lambda$ with exception of the eigenvalue
\begin{equation} \label{eq:4.6}
\lambda_1 \ =\
 -\frac{2}{(\gamma_0 -1)\Big(1 + \sqrt{1 + \frac{4}{\gamma_0 - 1}}\Big)}
\,.
\end{equation}
satisfy the estimate 
\begin{equation} \label{eq:4.7}
\lambda 
\ \leq\
 - \frac{1}{\max \{ (\gamma_0 - 1), \sqrt{\gamma_0 - 1}\}}
 \,.
\end{equation}
\end{proposition}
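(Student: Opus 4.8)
The plan is to reduce the eigenvalue problem to locating the real zeros of the scalar functions appearing in the brackets of \eqref{eq:4.3} and \eqref{eq:4.4}. Since $\lambda = \ii\mu$ is real iff $\mu$ is purely imaginary, write $\mu = \ii\sigma$ with $\sigma > 0$ so that $\lambda = -\sigma$; then $w = \ii/(2\mu) = 1/(2\sigma) > 0$, so we must show that for every $\gamma \in \R^+\setminus\{1\}$ the equation $g_n(w) = 0$ from \eqref{eq:4.5} has a positive root for infinitely many $n$ (and hence, summing over $|m|\le n$ as in the $\gamma\equiv 1$ argument, produces a genuine eigenfunction), and moreover to pin down the largest such root. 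Note that replacing $\gamma$ by $1/\gamma$ interchanges \eqref{eq:4.3} and \eqref{eq:4.4}, so without loss of generality one takes $\gamma = \gamma_0 = \max\{\gamma, 1/\gamma\} > 1$ and studies
$$
g_n(w) \ =\ \frac{1-\gamma_0}{2w} R_n(w) \ +\ w R_n'(w) \ =\ 0,\qquad w > 0.
$$
Multiplying by $2w$, this is $(\gamma_0 - 1) R_n(w) = 2 w^2 R_n'(w)$, i.e. $\gamma_0 - 1 = 2 w^2 R_n'(w)/R_n(w)$ on the set where $R_n(w)\neq 0$, which by the Macdonald/Gauss--Lucas argument recalled in the previous proof is all of $\{\re w > 0\}$.

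First I would establish existence of a positive root. Since $R_n(w) = \sum_{m=0}^n a_m w^m$ with $a_0 = 1$ and all $a_m > 0$, the function $\varphi_n(w) := 2w^2 R_n'(w)/R_n(w)$ is smooth and positive on $(0,\infty)$, with $\varphi_n(w)\to 0$ as $w\to 0^+$ (because $R_n'(w)\to a_1 = n(n+1)$ while the factor $w^2\to 0$) and $\varphi_n(w)\to 2nw\to +\infty$ as $w\to+\infty$ (the leading terms of $R_n'$ and $R_n$ give $R_n'/R_n \sim n/w$). By the intermediate value theorem there is $w_n > 0$ with $\varphi_n(w_n) = \gamma_0 - 1$; this $w_n$ gives $\mu_n = \ii/(2w_n)$ and a real eigenvalue $\lambda_n = -1/(2w_n) < 0$. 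Doing this for all $n\ge 1$ (and, as in Proposition 4.2, noting $\beta_n^m$ can also be chosen nonzero via \eqref{eq:4.4} if one instead works with $1/\gamma_0$) yields infinitely many real eigenvalues, proving the first assertion. One should check the eigenvalues are genuinely distinct or at least infinite in number — distinctness follows because $w_n\to 0$ as $n\to\infty$ (see below), so the $\lambda_n$ are unbounded and in particular take infinitely many values.

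For the quantitative bounds I would single out $n = 1$, where $R_1(w) = 1 + 2w$, $R_1'(w) = 2$, so \eqref{eq:4.5} reads $(\gamma_0-1)(1+2w) = 4w^2$, a quadratic with positive root $w_1 = \tfrac14\big(\gamma_0-1\big)\big(1 + \sqrt{1 + \tfrac{4}{\gamma_0-1}}\big)$; then $\lambda_1 = -1/(2w_1)$ is exactly \eqref{eq:4.6} after simplification. For $n\ge 2$ the claim \eqref{eq:4.7} is equivalent to $w_n \le \tfrac12\max\{\gamma_0-1,\sqrt{\gamma_0-1}\}$, so I must show that the root $w_n$ of $\varphi_n(w) = \gamma_0-1$ cannot exceed that bound. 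The key monotonicity input is a lower bound of the form $\varphi_n(w) \ge \min(cw, c'w^2)$ uniformly in $n\ge 2$ for $w>0$: indeed, using $R_n'(w) = \sum_{m\ge 1} m a_m w^{m-1}$ and comparing termwise with $R_n(w) = \sum_{m\ge 0} a_m w^m$, one gets $2w^2 R_n'(w) \ge 2 a_1 w^2 = 2n(n+1)w^2 \ge$ (something like) $c' w^2 R_n(w)$ for small $w$ and a linear-in-$w$ lower bound for large $w$, forcing $\varphi_n(w)$ to reach $\gamma_0-1$ no later than $w = \tfrac12\max\{\gamma_0-1,\sqrt{\gamma_0-1}\}$. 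The main obstacle, and the step requiring the most care, is exactly this: obtaining a lower bound on $\varphi_n$ that is \emph{uniform in $n$} and sharp enough to give the stated constant — a crude bound would give a worse constant than $\max\{\gamma_0-1,\sqrt{\gamma_0-1}\}$. I expect this is handled by estimating $R_n'(w)/R_n(w)$ from below using the coefficient identity $a_{m+1}/a_m = (n+m+1)(n-m)/(m+1) \ge 1$ for $0\le m\le n-1$ together with $n\ge 2$ (which controls $a_1/a_0 = n(n+1)\ge 6$), and then a direct comparison with the quadratic and linear test functions whose crossing points are precisely $\tfrac12\sqrt{\gamma_0-1}$ and $\tfrac12(\gamma_0-1)$. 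Finally, monotonicity of $\varphi_n$ near its first crossing (or a convexity/Sturm-type argument on $R_n$) ensures $w_n$ is the unique relevant root, completing the proof of \eqref{eq:4.7}.
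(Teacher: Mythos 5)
Your overall route is the paper's: reduce to positive roots $w$ of $g_n(w)=0$ with $\gamma$ replaced by $\gamma_0=\max\{\gamma,1/\gamma\}>1$ (using that $\gamma\mapsto 1/\gamma$ interchanges \eqref{eq:4.3} and \eqref{eq:4.4}), treat $n=1$ explicitly to get \eqref{eq:4.6}, and for $n\ge 2$ bound every positive root by comparing $\varphi_n(w)=2w^2R_n'(w)/R_n(w)$ with the test functions $2w$ and $4w^2$. But two essential steps are not actually carried out. First, the infinitude of real eigenvalues: you claim the eigenvalues $\lambda_n=-1/(2w_n)$ are infinitely many because ``$w_n\to 0$ (see below)'', yet what you prove below is only the uniform upper bound $w_n\le\frac12\max\{\gamma_0-1,\sqrt{\gamma_0-1}\}$, i.e.\ a lower bound on $|\lambda_n|$; this says nothing about unboundedness of $\lambda_n$, and as written nothing excludes that the roots $w_n$ take only finitely many distinct values, giving only finitely many distinct eigenvalues. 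The paper closes exactly this point by a different mechanism: if the family $\{g_n\}$ had only finitely many positive real roots, then infinitely many indices $n$ would share a common root, which would produce an eigenvalue of infinite multiplicity, contradicting the finite multiplicity of eigenvalues of $G_b$ in $\{\re z<0\}$ from \cite{CPR1}. (One could alternatively prove $w_n\to 0$ directly, but that needs a lower bound on $\varphi_n$ growing with $n$ at fixed $w$, which you neither state nor prove.)

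Second, the quantitative heart of \eqref{eq:4.7} for $n\ge 2$ --- the uniform-in-$n$ bound $\varphi_n(w)\ge\min(2w,4w^2)$ --- is left at the level of ``I expect this is handled by\dots''. This is precisely where the paper does its work: it splits at $w=\frac{1}{2\sqrt 3}$ and uses $a_2=\frac12(n+2)(n+1)n(n-1)\ge 12$ to get $wR_n'(w)/R_n(w)\ge 1$ for $w\ge\frac{1}{2\sqrt 3}$, hence $2w_0\le\gamma_0-1$ there, and uses $a_1=n(n+1)\ge 6$ together with $\frac12 a_2-a_1\ge 0$ to get $R_n'(w)/R_n(w)\ge 2$ for $0<w\le\frac{1}{2\sqrt 3}$, hence $4w_0^2\le\gamma_0-1$ there; combining the two cases gives \eqref{eq:4.7}. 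Your choice of comparison functions and crossing points $\frac12(\gamma_0-1)$, $\frac12\sqrt{\gamma_0-1}$ is the right one, but without these explicit inequalities the estimate is not proved. Finally, your closing appeal to monotonicity of $\varphi_n$ near its ``first crossing'' to single out a unique relevant root is both unproven and unnecessary: the pointwise lower bound $\varphi_n\ge\min(2w,4w^2)$ controls \emph{every} positive root of $g_n$, which is what the statement requires, since distinct roots of the same $g_n$ all yield real eigenvalues.
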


\begin{proof} Assume first that $\gamma > 1$. Then $q_n(w) =w g_n(w) = 0$ has at least one real root $w_0 > 0.$ Indeed, $q_n(0) = \frac{1- \gamma}{2} < 0,\: q_n(w) \to +\infty$ as $w \to + \infty.$ Choosing $\alpha_n^{m_0} \neq 0$ for an integer $m_0,\: |m_0| \leq n$ and taking all
other coefficients $\alpha_n^m,\: \beta_n^m$ equal to 0, 
yields $E_{tan} \neq 0$ and $G_b$ has an eigenfunction with eigenvalue $\lambda = -\frac{1}{2w_0} < 0.$

 It is not excluded that $g_n(w)$ and $g_m(w)$ for $n \neq m$ have the same real positive root. If we assume that for $\re w > 0$ the sequence of functions $\{g_n(w)\}_{n=1}^{\infty}$ has only a finite number of real roots $w_1,...,w_N$, $w_j \in \R^+$, then there exists an infinite number of functions $g_{n_j}(w)$ having  the same root which implies that we have an eigenvalue of $G_b$ with infinite multiplicity. This is a contradiction, and the number of real eigenvalues of $G_b$ is infinite. 
%For $0 < \gamma < 1$ the equation (\ref{eq:4.5}) has no real positive roots.\\

 It remains to establish the bound on the real eigenvalues. First, consider the case $n = 1$. Then one obtains the equation
$$\frac{2 w^2}{2 w + 1} = \frac{\gamma -1}{2}$$
which has a positive root $w_0 = \frac{1}{4} \Bigl(\gamma - 1 + \sqrt{(\gamma - 1)^2 + 4 (\gamma - 1)}\Bigr)$. This yields the  $\lambda_1$ from (\ref{eq:4.6})

Next  examine the case $n \geq 2.$  For a root  $w_0 \in \R^+$ one has
$$
w_0\bigg(
w_0 \frac{R_n'(w_0)}{R_n(w_0)} \bigg)
\ =\ 
 \frac{\gamma - 1}{2}\,.
 $$

{\bf Case 1.}  $w_0 \geq \frac{1}{2\sqrt{3}}.$ 
 Then the inequality
$$
\frac{\sum_{m = 2}^n m a_m w_0^m + a_1w_0 }{\sum_{m = 2}^n  a_m w_0^m + a_1 w_0 + 1} 
\ \geq\
 \frac{2\sum_{m = 2}^n  a_m w_0^m + a_1w_0}{\sum_{m = 2}^n  a_m w_0^m + a_1 w_0 + 1}
 \ \geq 1
 $$
is satisfied since $a_2 = \frac{1}{2} (n+2)(n+1)n(n-1) \geq 12.$ Consequently,
$ 2 w_0 \leq \gamma - 1$ and this implies that
 the eigenvalue $\lambda = -\frac{1}{2 w_0}$ 
 satisfies
\begin{equation} \label{eq:4.8}
\lambda\ <\
 - \frac{1}{\gamma - 1}\,.
\end{equation}
{\bf Case 2.} $0 < w_0 \leq \frac{1}{2 \sqrt{3}}.$
 Apply the inequality
$$
\frac{\sum_{m=2}^n m a_m w_0^{m-1} + a_1}{w_0 \sum_{m=2}^{n} a_m w_0^{m-1} + a_1 w_0+  1} 
\ \geq\ 
 \frac{2\sum_{m=2}^{n} a_m w_0^{m-1} + a_1}{w_0 \sum_{m=2}^{n} a_m w_0^{m-1} + a_1 w_0 + 1}
 \ \geq\
  2
  $$
that is equivalent to
$$
2\Bigl[(1 - w_0) S_0 -  a_1 w_0\Bigr] + a_1 \geq  2
$$
with $S_0 = \sum_{m=2}^{n} a_m w_0^{m-1}$.
This inequality holds because 
$$
(1 - w_0)\sum_{m=2}^{n} a_m w_0^{m-1} - a_1 w_0\ \geq\
 (\frac{1}{2} a_2 - a_1)w_0\,,
 \quad
 a_1 = (n+1)n \geq 2,
 $$ 
and,
$$
\frac{1}{2} a_2 - a_1 = \frac{1}{4} (n+2) (n+1) n (n-1) - (n+1) n = n(n+1) \Bigl[ \frac{1}{4}(n+2)(n-1) - 1\Bigr] \geq 0.
$$

Therefore,
$$
2w_0^2 
\ \leq\
 w_0^2 \frac{\sum_{m=1}^n m a_m w_0^{m-1}}{\sum_{m=1}^{n} a_m w_0^{m} + 1} 
 \ = \
 \frac{\gamma - 1}{2}\,.
 $$
This easily yields
\begin{equation} \label{eq:4.9}
\lambda \leq - \frac{1}{\sqrt{
(\gamma - 1)}}.
\end{equation}

 In the case $0 < \gamma < 1$ one has $1/\gamma > 1$ and we apply the above analysis to the equation (\ref{eq:4.4}). Setting $\gamma_0 = \max \{ \gamma , \frac{1}{\gamma}\}$ and taking into account (\ref{eq:4.8}) and (\ref{eq:4.9}), we obtain the result. This completes the proof.
\end{proof}

\begin{remark}
Proposition 4.2 yields a more precise result than that in \cite{CPR} since we prove the existence of an infinite number of real eigenvalues $G_b$ for every $\gamma \in \R^+ \setminus \{1\}.$ In the case $\gamma = \frac{1}{1 + \ep},\: \ep > 0$ the eigenvalue $\lambda_1$ has the form
$$\lambda_1 = \frac{1}{2}\Bigl(1 - \sqrt{1 + \frac{4}{\epsilon}}\Bigr)$$
and this result for small $\epsilon >0$ has been obtained in \cite{CPR}.
Clearly, as $\gamma \to 1$ the real eigenvalues of $G_b$ go to $-\infty.$ . 
\end{remark} 

It is easy to see that for $\gamma > 1$ the equation $g_n(w) = 0$ has no complex roots. 
Denote by
$$z_j, \quad \re z_j < 0,\quad j = 1,...,n,\: n \geq 1$$
 the roots of $R_n(w) = 0$.  Suppose that $ g_n(w_0) = 0,\: n \geq 1$ with $\re w_0 > 0, \: \im w_0 \neq 0.$ Then
$$
\im \Bigl[\frac{1 - \gamma}{2 w_0} + w_0 \sum_{j=1}^n \frac{1}{w_0 - z_j}\Bigr] = 0
$$
 and

\begin{align}
\label{eq:4.10}
-\frac{(1- \gamma) \im w_0}{2 |w_0|^2} + \re w_0\bigg[- \sum_{j=1}^n & \frac{\im w_0}{|w_0- z_j|^2} + \sum_{j =1}^n \frac{ \im z_j}{|w_0- z_j|^2}\bigg]
\cr
 &+ \im w_0 \sum_{j=1}^n \frac{\re w_0 - \re z_j}{|w_0- z_j|^2} \ =\  0\,.
\end{align}

 On the other hand, if $z_j$ with $\im z_j \neq 0$ is a root of $R_n(w)=0$, then $\bar{z}_j$ is also a root and
\begin{align*}
\frac{\im z_j}{|w_0 - z_j|^2} - \frac{\im z_j}{|w_0 - \bar{z}_j|^2}
&= \frac{\im z_j}{|w_0 - z_j|^2 |w_0 - \bar{z}_j|^2} \Bigl( |w_0- \bar{z}_j|^2 - |w_0 - z_j|^2\Bigr)
\cr
&= \frac{4 \im w_0 (\im z_j)^2}{|w_0 - z_j|^2 |w_0 - \bar{z}_j|^2}\ .
\end{align*}

Equation (\ref{eq:4.10}) becomes
\be \label{eq:4.11}
\im w_0 \bigg[
 \frac{\gamma - 1}{2|w_0|^2} - \sum_{j=1}^n \frac{\re z_j}{|w_0- z_j|^2} + \sum_{\im z_j > 0}\frac{4 \re w_0 (\im z_j)^2}{|w_0- z_j|^2 |w_0- \bar{z}_j|^2}
 \bigg] 
\ =\ 
 0\,.
\ee
 The term in the brackets $[...]$ is positive, and one concludes that $\im w_0 = 0.$ 

Repeating the argument of the Appendix in \cite{P}, 
 one can show that {\it for $0 < \gamma < 1$ the complex eigenvalues 
of $G_b$ lie in the region
$$
\Big\{z \in \C: \: |{\rm arg}\:z - \pi| > \pi/4, \quad \re z < 0\Big\}.$$
}

\begin{remark}
We do not know if there exist   non real eigenvalues for $B_3$.
\end{remark}

\vskip.3cm

{\bf  Acknowledgment.} We thank
 Georgi Vodev  for many useful discussions and remarks concerning 
 an earlier version of the paper.
 \vskip.3cm

\end{document}